\documentclass[11pt,a4paper]{article}

\usepackage{graphicx}
\usepackage{amsmath}
\usepackage{amsfonts}
\usepackage{amssymb}
\usepackage{placeins}
\usepackage{enumerate}
\usepackage{lscape,hyperref}
\usepackage{longtable}
\usepackage{microtype}
\usepackage{rotating}
\usepackage{subcaption}
\usepackage{color}
\usepackage{url}
\usepackage{amsthm}
\usepackage{float}
\usepackage{booktabs}  
\usepackage{afterpage} 
\usepackage{rotating}
\usepackage[title,titletoc,toc]{appendix}
\usepackage{mathtools}
\newtheorem{theorem}{Theorem}[section]

\usepackage[ruled,vlined,noline,linesnumbered]{algorithm2e}
\usepackage{algorithmic}
\usepackage[numbers,sort&compress]{natbib}
\usepackage{textcomp}
\usepackage[dvipsnames,svgnames]{xcolor}
\usepackage[normalem]{ulem}

\usepackage{tikz}

\usetikzlibrary{shapes,arrows,arrows.meta,calc}
\usepackage{tikz}
\usepackage{subcaption}
\usepackage{pifont}
\definecolor{okgreen}{RGB}{59,109,17}
\definecolor{badred}{RGB}{200,40,40}

\tikzset{
  omegaset/.style  = {fill=gray!15, draw=gray!70, line width=0.5pt},
  ring/.style      = {draw=okgreen, dashed, line width=0.6pt},
  ringfill/.style  = {fill=okgreen!45, even odd rule},
  badring/.style   = {draw=badred, dashed, line width=0.6pt, fill=badred!10, even odd rule},
  incumbent/.style = {circle, fill=black, inner sep=1.2pt},
}

\newtheorem{corollary}{Corollary}[section]

\newtheorem{definition}{Definition}[section]

\newtheorem{lemma}{Lemma}[section]

\newtheorem{proposition}{Proposition}[section]

\newtheorem{assumption}{Assumption}[section]

\SetCommentSty{mycommfont}
\def\real{\mathbb{R}}

\DeclareMathOperator{\glb}{global}
\DeclareMathOperator{\lcl}{local}

\newcommand{\email}[1]{Email: #1}

\title{STREGO: A Scalable Implementation of a Trust-Region Framework for Noisy Bayesian Optimization }

\author{
Y. Diouane
\thanks{GERAD \& Department of Mathematics and Industrial Engineering, Polytechnique Montr\'eal.
\email{{\tt youssef.diouane@polymtl.ca}}}
\and
I. El Hallaoui
\thanks{GERAD \& Department of Mathematics and Industrial Engineering, Polytechnique Montr\'eal.
\email{{\tt issmail.el-hallaoui@polymtl.ca}}}
\and
M. Kadri
\thanks{GERAD \& Department of Mathematics and Industrial Engineering, Polytechnique Montr\'eal.
\email{{\tt meisseme.kadri@etud.polymtl.ca}}}
\and
A. Lamghari
\thanks{GERAD \& Department of Management, Universit\'e du Qu\'ebec \`a Trois-Rivi\`eres.
\email{{\tt amina.lamghari@uqtr.ca}}}
}

\date{\today}

\begin{document}
\maketitle

\begin{abstract}
TREGO, a trust-region Bayesian optimization (BO) method for expensive blackbox problems, has convergence guarantees, but only for exact function values. In high dimension, its performance tends to deteriorate. This paper asks whether a trust-region BO method can be extended to handle noisy evaluations while being competitive in high dimension. We first generalize TREGO into a trust-region framework for noisy BO and extend its convergence analysis to noisy function estimates that are sufficiently accurate with sufficiently high probability. We then propose STREGO, a scalable implementation of this framework. Its global phase selects a batch of candidates by solving a bi-objective problem on the posterior mean of a Gaussian process and the integrated variance reduction. Unlike the predictive variance, this second objective does not favor boundary points merely because they are far from the data. Its local phase minimizes the posterior mean of a second Gaussian process, trained only on points close to the incumbent. STREGO is compared with TREGO, TURBO, BAXUS and NSMA-BO on synthetic and real-world problems of dimension $4$ to $124$, with and without noise. It gives the best average results in every synthetic setting tested and the best or tied-best solution on four of the six real-world problems. An implementation of STREGO is available at \url{https://github.com/Mayousk/STREGO}.
\end{abstract}
  \noindent
{\small
\textbf{Keywords:}
Bayesian optimization; Blackbox optimization; 
Noisy derivative-free optimization;
Trust-region framework. 
}

\noindent
{\small
\textbf{Funding:} This research is funded by the Natural Sciences and Engineering Research Council of Canada (NSERC) discovery grants  
 RGPIN-2024-05093 (Diouane),
    RGPIN-2024-06137 (El Hallaoui),
    and RGPIN-2019-07172 (Lamghari). 
}

\section{Introduction}
\label{sec:intro}
In this paper, we consider the blackbox optimization problem
\begin{equation}
  \min_{x \in \Omega} \; f(x).
  \label{eq:main-prob}
\end{equation}
The objective function $f \colon \Omega \subseteq \mathbb{R}^{d} \to \mathbb{R}$ is locally Lipschitz continuous and bounded from below but cannot be evaluated directly. Optimization algorithms have access only to a stochastic zeroth-order oracle $f_{\Theta}(x)$, which returns a noisy observation of $f(x)$ and depends on a random variable $\Theta$ with an unknown distribution. The feasible set $\Omega$ is a closed convex set with a nonempty interior, defined by unrelaxable constraints~\cite{LedWild2015}, typically bound constraints. We focus on instances in which each call to the oracle is expensive and the dimension $d$ is large, typically $d \geq 20$~\cite{moriconi2020high}.

Expensive calls limit the number of evaluations, and Bayesian optimization (BO)~\cite{mockus1989bayesian,shahriari2015taking,garnett2023bayesian} is designed for this setting. It fits a probabilistic surrogate to the evaluations collected so far and selects the next point by maximizing a cheap acquisition function built on this surrogate. BO has become a standard tool for problems of the form~\eqref{eq:main-prob} in machine learning~\cite{snoek2012practical}, chemistry~\cite{shields2021bayesian,guo2023bayesian} and aerospace design~\cite{priem2020efficient,zhang2023multi}. Its best-known instance is efficient global optimization (EGO)~\cite{jones1998efficient}, which pairs a Gaussian process (GP)~\cite{rasmussen2006gpml} with the expected improvement (EI)~\cite{mockus1974bayesian}. Diouane \textit{et~al.}~\cite{diouane2023trego} embedded EGO in a trust-region scheme called TREGO. Each TREGO iteration starts with a global phase, which performs one EGO iteration over $\Omega$. If this phase fails to achieve a sufficient decrease, a local phase maximizes EI inside a trust region around the incumbent. The sufficient-decrease condition, which also governs the trust-region radius, gives TREGO its convergence guarantees.

However, TREGO handles neither high dimension nor noise well. As $d$ grows, the surrogate requires far more data to cover the domain, and the acquisition subproblem becomes harder to solve. Moreover, EI rewards a large predictive variance, which is a pointwise measure of uncertainty and is largest far from the observed data. In high dimension, the points farthest from the data typically lie near the boundary of $\Omega$. Maximizing EI over the whole domain, as EGO and the global phase of TREGO do, therefore drives candidates toward the corners, where little is learned. This behavior is known as the boundary issue~\cite{siivola2018correcting,oh2018bock}. With noise, the convergence guarantees of TREGO no longer apply, because they assume exact function values, which the oracle does not provide.

Convergence guarantees under noise have been established outside BO, for methods whose step acceptance relies, as in TREGO, on a sufficient-decrease condition. Chen \textit{et~al.}~\cite{chen2018stochastic} proposed STORM, a trust-region method based on random models and on noisy function estimates that are sufficiently accurate with a fixed probability. They proved that STORM converges to a stationary point with probability one. Blanchet \textit{et~al.}~\cite{blanchet2019convergence} then used a supermartingale-based framework to bound the expected number of iterations STORM needs to reach an approximate stationary point. Paquette and Scheinberg~\cite{paquette2020stochastic} applied this probabilistic framework to stochastic line-search methods. Diouane \textit{et~al.}~\cite{diouane2022globally} extended it to evolution strategies, with a sufficient-decrease condition on noisy estimates and an extreme-barrier approach to constraints, and proved convergence to a first-order stationary point with probability one. More recently, Ding \textit{et~al.}~\cite{ding2026non} proposed a nonmonotone stochastic direct-search method and bounded its expected number of iterations with the stopping-time framework of~\cite{blanchet2019convergence}. All these methods, however, require the objective function to have a Lipschitz continuous gradient. For nonsmooth problems, Audet \textit{et~al.}~\citep{audet2019stomads} introduced StoMADS, which extends the mesh adaptive direct search (MADS) algorithm~\cite{AudetDennis2006} and its convergence analysis to noisy estimates of the objective function. StoMADS requires only local Lipschitz continuity of the objective. Using the density of the generated directions and martingale theory, its authors proved convergence to a Clarke stationary point with probability one.

Within BO, most methods designed for high dimension are heuristic and come with no convergence guarantees. Several of them are built around trust regions~\cite{eriksson2019scalable,namura2025regional,tang2026rethinking}, as is the local phase of TREGO. A trust region keeps candidates away from the boundary, and the surrogate only has to be accurate inside it, which reduces the amount of data needed for a reliable fit. Some instead assume structure in the objective, such as additivity~\cite{kandasamy2015high,bardou2023relaxing,ziomek2023random} or a low-dimensional active subspace~\cite{eriksson2021high,papenmeier2022increasing}.

Other works modify the acquisition step. Batch methods select several points per iteration rather than one, to improve the exploration--exploitation balance and to use parallel computing resources~\cite{ginsbourger2010kriging,snoek2012practical,daulton2021parallel}. Such batches are commonly produced by multistart strategies applied to the acquisition function. An alternative is to keep the posterior mean and variance of the surrogate as two separate objectives, rather than combine them into a single acquisition function, and to solve the resulting bi-objective problem~\cite{bischl2014moi,vzilinskas2019bi}. The Pareto front then offers a range of trade-offs between the two objectives and yields a batch naturally. Carciaghi \textit{et~al.}~\cite{carciaghi2025bi} built on this idea. They reconstruct the Pareto front with the Non-dominated Sorting Memetic Algorithm (NSMA)~\cite{lapucci2023memetic}, which combines evolutionary operators with gradient-based refinement, and extract the batch by clustering, either in the input space or in the objective space. Their method is presented as a batch acquisition strategy and is explicitly studied for $d \geq 20$. 

Each of these BO approaches has weaknesses. Trust-region solvers do not separate global exploration from local exploitation into two independent mechanisms. Methods based on structural assumptions fail, or waste evaluations, when the effective dimension is high. Multi-objective methods still use the predictive variance as their measure of uncertainty, so they remain exposed to the boundary issue. Clustering in the input space moves some candidates back toward the interior, but only as a correction applied after they are generated. In addition, most of these works assume noise-free evaluations. These observations raise the question that this paper addresses. Can a trust-region BO method keep its convergence guarantees with noisy evaluations and remain competitive in high dimension?

Our contributions are threefold. First, we generalize TREGO into a trust-region framework for noisy BO. Its definition and convergence analysis cover noisy function estimates that are sufficiently accurate with sufficiently high probability. They also cover iterations in which the trust region contains no feasible point, a case that TREGO does not address. As in StoMADS, the analysis requires $f$ to be locally Lipschitz continuous rather than to have a Lipschitz continuous gradient. Second, we introduce STREGO, a scalable implementation of this framework that couples a bi-objective global infill criterion with a purely exploitative local phase. The two objectives of the global criterion are the posterior mean and the integrated variance reduction, which measures how much evaluating a candidate would reduce the predictive variance over the whole domain. A point near the boundary is therefore no longer favored merely because it is far from the observed data. The local phase minimizes, inside the trust region, the posterior mean of a separate GP trained only on the evaluated points near the incumbent, so that its lengthscales adapt to this region rather than to the whole domain. Third, we test STREGO on synthetic problems of dimension $4$ to $100$ and on real-world problems of dimension $16$ to $124$, with and without noise. A first experiment shows that, on three of the four synthetic functions tested, the integrated variance reduction generally leads to better solutions than the predictive variance. We then compare STREGO with TREGO and with TURBO~\cite{eriksson2019scalable}, BAXUS~\cite{papenmeier2022increasing} and NSMA-BO~\cite{carciaghi2025bi}, three state-of-the-art solvers for high-dimensional BO. On the synthetic problems, STREGO obtains the best mean solution quality in every configuration tested. From dimension $50$ on, it is also cheaper than all the other solvers, except NSMA-BO in the noise-free setting. On four of the six real-world problems, STREGO finds the best solution, alone on three and tied with TREGO on the fourth.

The remainder of the paper is organized as follows. Section~\ref{sec:preliminaries} recalls EGO and TREGO, with a focus on the GP posterior. Section~\ref{sec:Framework} presents the framework and its convergence analysis. Section~\ref{sec:STREGO} introduces STREGO, motivates the use of the integrated variance reduction and details the global and local phases. Section~\ref{sec:numerical} reports the numerical experiments. Finally, Section~\ref{sec:conclusion} concludes the paper and outlines directions for future research. An implementation of STREGO is available at \url{https://github.com/Mayousk/STREGO}.

\section{Preliminaries: EGO and TREGO}
\label{sec:preliminaries}
In this section, we first recall EGO, with a focus on the GP surrogate and the EI criterion and then describe the mechanism of TREGO. 
\subsection{EGO}
EGO~\cite{jones1998efficient} is an instance of the BO framework in which a GP surrogate is trained on a growing design of experiments (DoE) and the next candidate is selected by maximizing the EI infill criterion. The DoE $\mathcal{D}_n = (\mathcal{X}_n, \mathcal{Y}_n)$ consists of $n$ design points $\mathcal{X}_n = \{x_1, \dots, x_n\} \subset \Omega$ and their corresponding evaluations $\mathcal{Y}_n = \{y_1, \dots, y_n\}$ and the GP learns the mapping between the two. Formally, the objective function is modeled as a Gaussian process, $f \sim \mathcal{GP}(m, k)$, with constant prior mean $m$ and covariance kernel $k$, whose hyperparameters are re-estimated from the collected data as the optimization progresses. Common choices are stationary kernels such as the squared-exponential kernel and the Mat\'ern family. These kernels depend on the inputs only through the scaled distance $r(x, x') = \big(\sum_{i=1}^{d} (x_i - x'_i)^2 / \ell_i^2\big)^{1/2}$. The lengthscale $\ell_i > 0$ controls how quickly the correlation between two points decays along the $i$-th coordinate. A small $\ell_i$ lets $f$ vary rapidly in that direction, while a large $\ell_i$ makes it nearly flat.

Let $K \in \mathbb{R}^{n \times n}$ be the matrix with entries $K_{ij} = k(x_i, x_j)$ and $k_n(x) = [k(x, x_1), \dots, k(x, x_n)]^\top$. Conditioning on $\mathcal{D}_n$ yields a Gaussian posterior $f(x) \mid \mathcal{D}_n \sim \mathcal{N}\big(\mu_n(x), \sigma^2_n(x)\big)$ whose mean and variance are given in closed form by:
\begin{align}
    \mu_n(x)      &= m + k_n(x)^\top \left(K + \tau^2 I\right)^{-1} \left(\mathcal{Y}_n - m\mathbf{1}\right), \label{eq:post_mean}\\
    \sigma_n^2(x) &= k(x, x) - k_n(x)^\top \left(K + \tau^2 I\right)^{-1} k_n(x), \label{eq:post_var}
\end{align}
where $\tau^2 \geq 0$ is the variance of the Gaussian observation noise ($\tau^2 = 0$ for noiseless evaluations), $I$ is the $n \times n$ identity matrix and $\mathbf{1}$ is the $n$-dimensional vector of ones. The mean $\mu_n(x)$ is the model's estimate of $f(x)$, while the standard deviation $\sigma_n(x)$ quantifies the model's uncertainty about this estimate.

Given the current best value $y^{\min}_n = \min_{1 \le j \le n} y_j$, EI measures the expected amount by which a candidate $x$ improves upon it under the GP posterior,
$\mathrm{EI}_n(x) = \mathbb{E}\!\left[\max\big(y^{\min}_n - f(x),\, 0\big) \,\middle|\, \mathcal{D}_n\right]$.
EI admits the closed form
\begin{equation}
    \mathrm{EI}_n(x) =
    \begin{cases}
        \big(y^{\min}_n - \mu_n(x)\big)\,\Phi\big(z_n(x)\big) + \sigma_n(x)\,\phi\big(z_n(x)\big) & \text{if } \sigma_n(x) > 0,\\[4pt]
        \max\big(y^{\min}_n - \mu_n(x),\, 0\big) & \text{if } \sigma_n(x) = 0,
    \end{cases}
    \label{eq:ei}
\end{equation}
where $z_n(x) = \frac{y^{\min}_n - \mu_n(x)}{\sigma_n(x)} $ and $\Phi$ and $\phi$ denote the standard normal cumulative distribution and density functions. The first term rewards candidates with a low predicted mean (exploitation) and the second rewards candidates with high uncertainty (exploration).

\subsection{TREGO}
\label{sec:trego}
Building on the EGO framework and on the search-poll paradigm of direct-search methods~\cite{AuHa2026}, TREGO~\cite{diouane2023trego} embeds EGO within a trust-region scheme that comes with theoretical convergence guarantees. At each iteration $k$, TREGO maintains an incumbent $x_k$, its function value $f(x_k)$ and a trust-region radius $\delta_k > 0$. The iteration starts with a global phase and a local phase is performed only if the global phase fails to achieve a sufficient decrease.

The global phase performs one EGO iteration over the entire domain $\Omega$: EI is maximized over $\Omega$ and the resulting candidate $x^{\glb}_{k}$ is evaluated, yielding $f(x^{\glb}_{k})$. This candidate is accepted only if it achieves a sufficient decrease with respect to the incumbent, namely
\begin{equation}
    f(x^{\glb}_{k}) \leq f(x_k) - \frac{\kappa}{2}\,\delta_k^2,
    \label{eq:sufficient-decrease-TREGO-global}
\end{equation}
where $\kappa > 0$ is a user-defined constant. If \eqref{eq:sufficient-decrease-TREGO-global} holds, the global phase is successful, the incumbent is updated to $x_{k+1} = x^{\glb}_{k}$, $\delta_{k+1}=\gamma\delta_{k}$ (with $\gamma > 1$) and the local phase is skipped. Otherwise, the iteration proceeds to the local phase.

The local phase follows the same mechanism, except that EI is maximized over a trust region around the incumbent given by $\overline{\Omega}_k$:
\begin{equation}
    \overline{\Omega}_k = \left\{ x \in \Omega \;\middle|\; r_{\min}\,\delta_k \leq \|x - x_k\| \leq r_{\max}\,\delta_k \right\},
    \label{eq:trust-region-TREGO}
\end{equation}
where $0 < r_{\min} < r_{\max}$ are user-defined constants.
The resulting local candidate $x^{\lcl}_{k} \in \overline{\Omega}_k$ is evaluated and tested against the local sufficient decrease condition 
\begin{equation}
    f(x^{\lcl}_{k}) \leq f(x_k) - \frac{\kappa}{2}\,\delta_k^2.
    \label{eq:sufficient-decrease-TREGO-local}
\end{equation}
If this condition is satisfied, the local phase is a success, $x_{k+1} = x^{\lcl}_{k}$ and the radius is expanded as $\delta_{k+1} = \gamma\,\delta_k$. Otherwise, the incumbent is kept, $x_{k+1} = x_k$ and the radius is shrunk as $\delta_{k+1} = \beta\,\delta_k$ with $0 < \beta < 1$.

The general framework of TREGO allows each phase to consist of several consecutive EGO steps: $\mathrm{G}$ steps over $\Omega$ in the global phase and $\mathrm{L}$ steps within $\overline{\Omega}_k$ in the local phase. The sufficient-decrease condition is checked only at the end of each phase, once all $\mathrm{G}$ (respectively $\mathrm{L}$) steps have been performed. The resulting scheme is illustrated in Figure~\ref{fig:trego}.

\begin{figure}[!htbp]
    \centering
    \begin{tikzpicture}[
        scale=0.43, transform shape,
        font=\bfseries\huge,
        line width=1.3pt,
        >={Latex[length=4mm,width=2.8mm]},
        box/.style  = {draw, rectangle, align=center, inner sep=9pt},
        rbox/.style = {box, rounded corners=10pt},
        dec/.style  = {draw, diamond, align=center, inner sep=0pt, aspect=1.5},
        obox/.style = {rbox, minimum width=5.4cm, minimum height=2.4cm},
    ]

        \node[rbox, minimum width=7.5cm, minimum height=3cm] (global) at (0,0)
            {Global phase over $\Omega$\\[2pt] \mdseries (Update the DoE)};

        \node[rbox, minimum width=10cm, minimum height=2.8cm] (local) at (0,-5.2)
            {Local phase\\[2pt] over the trust region $\overline{\Omega}_k$\\[2pt] \mdseries (Update the DoE)};

        \node[obox] (succ) at (-4.6,-11)
            {$\delta_{k+1}=\gamma\delta_k$\\[4pt] \mdseries Update $x_{k+1}$};
        \node[obox] (fail) at ( 4.6,-11)
            {$\delta_{k+1}=\beta\delta_k$\\[4pt] $x_{k+1}=x_k$};

        \node[dec, minimum width=8.4cm, minimum height=5.2cm] (test) at (15,-3.8)
            {A stopping\\ condition\\ is satisfied};
        \node[box, minimum width=6.4cm, minimum height=2.4cm] (stop) at (15,-9.8)
            {Stop and return\\ current iterate as\\ solution};

        \draw[->] (-12.4,0) -- node[pos=0.35, above, align=center]
            {Start from $x_0$\\ $k=0$} (global.west);

        \draw[->] (global.south) -- node[right=17mm] {Failure} (local.north);

       \draw[->] ([yshift=-6mm]global.west) -| ([xshift=-3.1cm]succ.west)
    node[pos=0.75, xshift=-2.5mm, rotate=90, anchor=south] {Success}
    node[pos=0.75, right=2mm] {$x^{\mathrm{global}}_{k}$}
    -- (succ.west);

        \draw[->] ([xshift=-2.8cm]local.south) -- ++(0,-0.95)
            node[midway, left=14mm] {Success}
            node[midway, right=2mm, yshift=-1.5mm] {$x^{\mathrm{local}}_{k}$}
            -| (succ.north);

        \draw[->] ([xshift=2.8cm]local.south) -- ++(0,-0.95)
            node[midway, right=14mm] {Failure}
            -| (fail.north);

        \draw (succ.south) |- (9.4,-13);
        \draw (fail.south) -- (fail.south |- 0,-13);
        \draw[->] (9.4,-13) |- node[pos=0.25, xshift=2.5mm, rotate=-90, above] {Increment $k$} (test.west);

        \draw[->] (test.north) |- node[pos=0.6, above] {No} (global.east);
        \draw[->] (test.south) -- node[right=2mm] {Yes} (stop.north);
    \end{tikzpicture}
    \caption{Overview of TREGO, adapted from \cite{diouane2023trego} (here $0<\beta < 1 <\gamma $).}
    \label{fig:trego}
\end{figure}
\section{A trust-region framework for noisy BO}
\label{sec:Framework}
In this section, we introduce a general stochastic trust-region framework for noisy BO. The framework is designed to be general enough to support different implementations. Its convergence analysis is given in Section~\ref{sec: convergence_proof}. 

\subsection{The framework}
\label{app:framework}

We extend TREGO to the noisy setting, keeping the global and local steps unspecified. The way candidates are generated in each phase is left as a design choice and only the structure that alternates between the two phases and decides when a step is accepted is fixed.

Unlike TREGO, the proposed framework does not require exact function values at the global and local candidates. Instead, $f^{\glb}_{k}$ and $f^{\lcl}_{k}$ are only assumed to be estimates of $f(x^{\glb}_{k})$ and $f(x^{\lcl}_{k})$ and the framework leaves the estimation procedure open. Any estimator can therefore be used, ranging from a single noisy evaluation to a sample average over repeated evaluations at the same point, or the posterior mean of the surrogate. Furthermore, the incumbent value $f_{k} \approx f(x_k)$ is re-estimated at each iteration. 

Once a global candidate is generated, the global sufficient-decrease condition given by Equation~\eqref{eq:sufficient-decrease-TREGO-global} becomes:
\begin{equation}
    f^{\glb}_{k} \leq f_{k} - \frac{\kappa}{2}\,\delta_k^2,
    \label{eq:sufficient-decrease-global-framework}
\end{equation}
 If this condition is satisfied, the update proceeds as described in Section~\ref{sec:trego}. Otherwise, the algorithm moves to the local phase.

The local phase is performed on a trust region $\overline{\Omega}_k$ defined as an annulus around the current incumbent $x_k$ as stated in \eqref{eq:trust-region-TREGO}.
However, in the constrained case where $\Omega \subsetneq \mathbb{R}^{d}$, this set
may be empty. This might happen for instance when the step size $\delta_k$ is too large relative to
the extent of the feasible domain around $x_k$. Figure~\ref{fig:tr-cases}
illustrates the possible configurations. In cases~(a) and~(b), $\overline{\Omega}_k$ is
nonempty and the local phase is well defined. These are the only cases covered by TREGO~\cite{diouane2023trego}. Case~(c), where $\overline{\Omega}_k=\emptyset$, is not addressed in
the original framework. For this configuration, the local phase cannot be performed, so the iteration is
not well defined and its convergence analysis does not apply. To close this gap, we introduce an additional (locally) empty iteration,
in which the incumbent is kept and the step size is reduced. Since an empty
trust region reveals nothing about $f$ and is purely a geometric issue, the
step size is reduced only mildly. The update rule is therefore given by
$x_{k+1}=x_k$ and $\delta_{k+1}=\beta_0\delta_k$ with $\beta_0<1$.

\begin{figure}[!h]
  \centering
  \begin{subfigure}[t]{0.32\linewidth}
    \centering
    \parbox[c][3cm][c]{\linewidth}{\centering
    \begin{tikzpicture}[scale=0.68]
      \draw[omegaset] (0,0) rectangle (4.6,3.6);
      \node at (0.3,3.3) {$\Omega$};
      \coordinate (xk) at (2.3,1.8);
      \fill[ringfill, even odd rule] ($(xk)+(-1.2,-1.2)$) rectangle ++(2.4,2.4)
                                     ($(xk)+(-0.6,-0.6)$) rectangle ++(1.2,1.2);
      \draw[ring] ($(xk)+(-1.2,-1.2)$) rectangle ++(2.4,2.4)
                  ($(xk)+(-0.6,-0.6)$) rectangle ++(1.2,1.2);
      \node[incumbent, label={[font=\scriptsize, inner sep=0.5pt]above:$x_k$}] at (xk) {};
    \end{tikzpicture}}
    \caption{ $\overline{\Omega}_k\neq \emptyset$ (annulus inside).}
    \label{fig:tr-inside}
  \end{subfigure}
  \hfill
  \begin{subfigure}[t]{0.34\linewidth}
    \centering
    \parbox[c][3cm][c]{\linewidth}{\centering
    \begin{tikzpicture}[scale=0.68]
      \draw[omegaset] (0,0) rectangle (4.6,3.6);
      \node at (0.3,0.3) {$\Omega$};
      \coordinate (xk) at (3.9,2.9);
      \begin{scope}
        \clip (0,0) rectangle (4.6,3.6);
        \fill[ringfill, even odd rule] ($(xk)+(-1.2,-1.2)$) rectangle ++(2.4,2.4)
                                       ($(xk)+(-0.6,-0.6)$) rectangle ++(1.2,1.2);
      \end{scope}
      \draw[ring] ($(xk)+(-1.2,-1.2)$) rectangle ++(2.4,2.4)
                  ($(xk)+(-0.6,-0.6)$) rectangle ++(1.2,1.2);
      \node[incumbent, label={[font=\scriptsize, inner sep=0.5pt]below:$x_k$}] at (xk) {};
    \end{tikzpicture}}
    \caption{$\overline{\Omega}_k\neq \emptyset$ (annulus partly outside).}
    \label{fig:tr-partial}
  \end{subfigure}
  \hfill
  \begin{subfigure}[t]{0.32\linewidth}
    \centering
    \parbox[c][3cm][c]{\linewidth}{\centering
    \begin{tikzpicture}[scale=0.68]
      \coordinate (xk) at (1.1,0.8);
      \draw[badring] ($(xk)+(-2.2,-2.2)$) rectangle ++(4.4,4.4)
                     ($(xk)+(-1.6,-1.6)$) rectangle ++(3.2,3.2);
      \draw[omegaset] (0,0) rectangle (2.2,1.6);
      \node at (0.3,1.3) {$\Omega$};
      \node[incumbent, label={[font=\scriptsize, inner sep=0.5pt]above:$x_k$}] at (xk) {};
    \end{tikzpicture}}
    \caption{$\overline{\Omega}_k=\emptyset$.}
    \label{fig:tr-large}
  \end{subfigure}
  \caption{An $\real^2$ illustration of the annular trust region $\overline{\Omega}_k$ (using the infinity norm) for different
  step sizes and incumbent positions $x_k \in \Omega$.}
  \label{fig:tr-cases}
\end{figure}

When the trust region is nonempty ($\overline{\Omega}_k\neq\emptyset$), a local candidate
$x^{\mathrm{\lcl}}_k$ is generated within $\overline{\Omega}_k$. Its estimate $f^{\lcl}_{k}$ is then tested against the local sufficient-decrease condition:
\begin{equation}
    f^{\lcl}_{k} \leq f_{k} - \frac{\kappa}{2}\,\delta_k^2.
    \label{eq:sufficient-decrease-local-framework}
\end{equation}
If this condition is satisfied, the iteration is successful, $x_{k+1}=x^{\lcl}_{k}$ and $\delta_{k+1}=\gamma\delta_k$. In the original TREGO framework, if $f^{\lcl}_{k}$ does not satisfy this condition, the radius is shrunk by a parameter $\beta$. However, in the noisy setting, the estimates remain polluted by noise to some extent, so the outcome of the test is itself uncertain. Shrinking the trust region based on noisy estimates can therefore lead to a premature collapse. Instead, we distinguish two types of unsuccessful local iterations: certain and uncertain ones. If $f_{k}^{\lcl}$ sufficiently violates Equation~\eqref{eq:sufficient-decrease-local-framework} meaning,
\begin{equation}
    f^{\lcl}_{k} \geq f_{k} + \frac{\kappa}{2}\,\delta_k^2,
    \label{eq:certain-failure}
\end{equation}
the iteration is declared a certain unsuccessful iteration, $x_{k+1}=x_k$ and the radius is shrunk by $\beta_2$ such that $\delta_{k+1}=\beta_2\delta_k$. Otherwise, the iteration is considered uncertainly unsuccessful, $x_{k+1}=x_k$ and the radius is shrunk by $\beta_1$ with $\beta_2\le\beta_1\leq\beta_0<1$. 

Commonly used in stochastic settings~\cite{blanchet2019convergence,audet2019stomads}, this procedure defines a tolerance interval within which the iteration is declared uncertainly unsuccessful and the trust region undergoes a mild shrinkage governed by $\beta_1$. Beyond this interval, the iteration is declared certainly unsuccessful and the radius is shrunk with the smaller parameter $\beta_2$. The complete procedure is detailed in Algorithm~\ref{alg:framework}.

\begin{algorithm}[!h]
    \footnotesize
    \caption{\bf A trust-region framework for noisy BO.}
    \label{alg:framework}
    \SetAlgoLined
    \LinesNotNumbered

    \KwData{
        Initialize the step size parameter $\delta_0>0$, sample an initial DoE $\mathcal{D}=(\mathcal{X},\mathcal{Y})$ of size $N_0$ with $\mathcal{X}=\{x_i\}_{i=1}^{N_0}\subset\Omega$, $\mathcal{Y}=\{f_i \approx f(x_i)\}_{i=1}^{N_0}$ and $x_0 \in \mathcal{X}$; Choose constants $0<\beta_2 \leq \beta_1 \leq \beta_0 < 1 < \gamma$ such that $\gamma\beta_2 \geq 1$.
        Choose constants $\kappa>0$, $r_{\max}> r_{\min}>0$ and set $k=0$.
    }

    \While{the stopping criterion is not satisfied}{
        \textbf{Step 0 (estimate at the incumbent):}

        Compute an estimate $f_{k} \approx f(x_k)$.

        \textbf{Step 1 (global BO phase over $\Omega$):}

        Generate a global candidate $x^{\glb}_{k} \in \Omega$ and an estimate $f^{\glb}_{k} \approx f(x^{\glb}_{k})$.

        \textbf{Step 2 (global sufficient-decrease test):}

        \eIf{$f^{\glb}_{k} \leq f_{k} - \frac{\kappa}{2}\delta_k^2$}{
            The iteration is (globally) successful. Set
            $x_{k+1}=x^{\glb}_{k}$ and $\delta_{k+1}=\gamma\delta_k$.
        }{
            Define the local trust region
            $\overline{\Omega}_k = \left\{ x \in \Omega \;\middle|\; r_{\min}\,\delta_k \leq \|x - x_k\| \leq r_{\max}\,\delta_k \right\}.$

            \eIf{$\overline{\Omega}_k \neq \emptyset$}{

                \textbf{Step 3 (local BO phase over the trust region $\overline{\Omega}_k$):}

                Generate a local candidate $x^{\lcl}_{k} \in \overline{\Omega}_k$ and an estimate $f^{\lcl}_{k} \approx f(x^{\lcl}_{k})$.

                \textbf{Step 4 (local sufficient-decrease test):}

                \eIf{$f^{\lcl}_{k} \leq f_{k} - \frac{\kappa}{2}\delta_k^2$}{
                    The iteration is (locally) successful. Set
                    $x_{k+1}=x^{\lcl}_{k}$ and $\delta_{k+1}=\gamma\delta_k$.
                }{
                    \eIf{$f^{\lcl}_{k} \geq f_{k} + \frac{\kappa}{2}\delta_k^2$}{
                        The iteration is a certain unsuccessful iteration. Set
                        $x_{k+1}=x_k$ and $\delta_{k+1}=\beta_2\delta_k$.
                    }{
                        The iteration is an uncertain unsuccessful iteration. Set
                        $x_{k+1}=x_k$ and $\delta_{k+1}=\beta_1\delta_k$.
                    }
                }
            }{
                The iteration is (locally) empty. Set
                $x_{k+1}=x_k$ and $\delta_{k+1}=\beta_0\delta_k$.
            }
        }

        Increment $k$\;
    }
\end{algorithm}

\subsection{Convergence analysis}
\label{sec: convergence_proof}


Throughout this section, the objective function $f$ is assumed to be accessed only through stochastic estimators. As a
consequence, all the quantities generated by Algorithm~\ref{alg:framework} are considered as random variables defined on a probability space $(\mathcal{E},\mathcal{G},\mathbb{P})$: the iterate $X_k$, the step-size parameter
$\Delta_k$, the direction $Q_k$, the step $S_k=\Delta_kQ_k$ and the estimates $F_k$ and $F_k^s$ of $f$ at $X_k$ and at the trial point $X_k+S_k$ defined below. Given a sample $\omega\in\mathcal{E}$, we denote their realizations by lowercase letters:
$x_k=X_k(\omega)$, $\delta_k=\Delta_k(\omega)$, $q_k=Q_k(\omega)$, $s_k=S_k(\omega)$,
$f_k=F_k(\omega)$ and $f_k^s=F_k^s(\omega)$. We denote by $\mathcal{F}_{k-1}$ the
$\sigma$-algebra generated by all the random quantities produced by
Algorithm~\ref{alg:framework} up to iteration $k-1$, in particular by
$F_0,F^{s}_0,\ldots,F_{k-1},F_{k-1}^{s}$, with $\mathcal{F}_{-1}=\sigma(x_0)$. Thus, $\{\mathcal{F}_k\}_{k\ge-1}$ is a
filtration and $X_k$ and $\Delta_k$ are $\mathcal{F}_{k-1}$-measurable.


Note that Algorithm~\ref{alg:framework} can be
interpreted as generating, at each iteration $k$, a trial point
\begin{equation}\label{xs}
x^{s}_{k}=
\begin{cases}
x^{\lcl}_{k} & \text{if the global phase is declared unsuccessful and } \overline{\Omega}_k\neq\emptyset,\\
x^{\glb}_{k} & \text{otherwise.}
\end{cases}
\end{equation}
The point
$x^s_{k}$ is always involved in the last sufficient-decrease test performed at
iteration $k$. By construction, $x^s_k\in\Omega$ for all $k$. Since $x_0\in\Omega$ and
$x_{k+1}\in\{x_k,\,x^s_k\}$, all the iterates also belong to $\Omega$, so that $f(X_k)$ and
$f(X_k+S_k)$ are well defined. We denote by $\mathcal{U}$ the (random) set
of indices $k$ for which the global phase is declared unsuccessful and
$\overline{\Omega}_k\neq\emptyset$ (the iterations where the local phase over $\overline{\Omega}_k$ is
performed) and by $\mathcal{U}_{\emptyset}$ the (random) set of indices for which the
global phase is declared unsuccessful and $\overline{\Omega}_k=\emptyset$.
The stochastic convergence analysis is inspired by what is done in~\cite{paquette2020stochastic,chen2018stochastic,audet2019stomads}.

In order to obtain convergence guarantees for Algorithm~\ref{alg:framework}, we require
the estimates of $f$ to be sufficiently accurate with a suitable probability. This is formalized in Definition~\ref{defi:probaccfun}.
\begin{definition} \label{defi:probaccfun}
Given constants $\varepsilon_f>0$ and $p \in (0,1]$, the sequence of the random quantities
$F_k$ and $F_k^{s}$ is called $p$-probabilistically $\varepsilon_f$-accurate, for
corresponding sequences $\{X_k\}$, $\{S_k\}$, $\{\Delta_k\}$, if the event
\begin{align*}
T^f_k \; \stackrel{\Delta}{=} \; \big\{ \left|F_k - f(X_k) \right| \; \le \;
\varepsilon_f \Delta_k^{2}
~\mathrm{and}~ \left|F_k^{s} - f(X_{k}+S_k) \right| \; \le \;
\varepsilon_f \Delta_k^{2} \big\}
\end{align*}
satisfies the condition
\[
p_k^* \; \stackrel{\Delta}{=} \;\mathbb{P}\left(~T^f_k~~\big| \mathcal{F}_{k-1}  \right) \geq p . \qquad 
\]
\end{definition}

We derive a convergence analysis of Algorithm~\ref{alg:framework} under the following assumptions.

\begin{assumption} \label{ass:f}
$f$ is bounded below by $f_{\mathrm{low}}$ on $\Omega$.
\end{assumption}


\begin{assumption} \label{ass:ProbAccEstimates:obj}
The sequence of random objective function estimates $\{F_k, F_k^{s}\}_k$ generated by Algorithm~\ref{alg:framework} satisfies the following two conditions.
\begin{enumerate}
\item[(1)] The sequence $\{F_k, F_k^{s}\}_k$ is $p$-probabilistically
$\varepsilon_f$-accurate for some $p \in (\frac{1}{2},1]$ and
$\varepsilon_f \in (0,\frac{\kappa}{8}]$, where $\kappa$ is the constant used in Algorithm~\ref{alg:framework}.
\item[(2)] There exists $\varepsilon_v>0$ such that the sequence of estimates
$\{F_k, F_k^{s}\}_k$ satisfies the following $\varepsilon_v$-variance condition for all $k \ge 0$,
\begin{align*}
\mathbb{E}\left(\left|F_k - f(X_k) \right|^2 \big| \mathcal{F}_{k-1}  \right)
&\;\le\; \varepsilon_v^2 \Delta_k^4
\quad \mathrm{and} \\
\mathbb{E}\left(\left|F_k^{s} - f(X_{k}+S_k) \right|^2 \big| \mathcal{F}_{k-1}  \right)
&\;\le\; \varepsilon_v^2 \Delta_k^4.
\end{align*}
\end{enumerate}
\end{assumption}
For the sake of our analysis, we introduce the following (random) Lyapunov function
\begin{equation}
\Phi_k \ = \ \nu f(X_k) + (1 - \nu) \Delta_k^2,
\label{eq:Lyapunov}
\end{equation}
where $\nu \in (0, 1)$ satisfies
\begin{equation}
\frac{\nu}{1 - \nu} \ge \frac{8}{\kappa}\max\{1 - \beta_0^{2},\gamma^2 -1\}.
\label{eq:nu}
\end{equation}
Since $f(x_0)$ is finite, $f \geq f_{\mathrm{low}}$ on $\Omega$ and, on successful iterations, $f(X_{k+1}) \leq f(X_k) + |F_k^s - f(X_k + S_k)| + |F_k - f(X_k)|$, where the error terms are integrable by Assumption~\ref{ass:ProbAccEstimates:obj}(2) and $\Delta_k \leq \gamma^k \delta_0$, an induction on $k$ shows that $\Phi_k$ is integrable, so all conditional expectations below are well defined.
 Consider a realization of Algorithm~\ref{alg:framework} and let $\phi_k$ be the corresponding realization of $\Phi_k$. We denote by $S$ the event ``the iteration $k$ is successful'', by $\overline{S}$ its complement and by $\mathbf{1}_S$ (respectively $\mathbf{1}_{\overline{S}}$) the corresponding indicator function. 

\begin{lemma} \label{lm:1}
Let Assumptions~\ref{ass:f} and~\ref{ass:ProbAccEstimates:obj} hold. Then, for every iteration $k$, one has
\begin{align*}
\max \Big[ &\mathbb{E}\left(\mathbf{1}_{\overline{T^f_k}} \left|F_k - f(X_k) \right| \big| \mathcal{F}_{k-1}  \right), \\
&\mathbb{E}\left(\mathbf{1}_{\overline{T^f_k}} \left|F_k^{s} - f(X_{k}+S_k) \right| \big| \mathcal{F}_{k-1}  \right) \Big]
\le (1-p_k^*)^{1/2} \varepsilon_v \Delta_k^2 \le (1-p)^{1/2} \varepsilon_v \Delta_k^2,
\end{align*}
where $\overline{T^f_k}$ denotes the event where the objective function estimates $(F_k, F_k^{s})$ are inaccurate and $\mathbf{1}_{\overline{T^f_k}}$ is the indicator function of the event set $\overline{T^f_k}$.
\end{lemma}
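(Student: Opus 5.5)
The bound follows from the conditional Cauchy--Schwarz inequality combined with the variance condition of Assumption~\ref{ass:ProbAccEstimates:obj}(2). I treat the incumbent error $|F_k - f(X_k)|$ in detail; the trial-point error $|F_k^s - f(X_k+S_k)|$ is handled the same way, and the maximum of the two bounds equals their common value.

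For the first term, conditional Cauchy--Schwarz gives
\begin{equation*}
\mathbb{E}\left(\mathbf{1}_{\overline{T^f_k}} \left|F_k - f(X_k)\right| \,\big|\, \mathcal{F}_{k-1}\right)
\le \left[\mathbb{E}\left(\mathbf{1}_{\overline{T^f_k}}^2 \,\big|\, \mathcal{F}_{k-1}\right)\right]^{1/2}
\left[\mathbb{E}\left(\left|F_k - f(X_k)\right|^2 \,\big|\, \mathcal{F}_{k-1}\right)\right]^{1/2}.
\end{equation*}
The two factors are controlled separately.

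\begin{itemize}
\item Since $\mathbf{1}^2 = \mathbf{1}$, the first factor equals $\mathbb{P}(\overline{T^f_k}\mid\mathcal{F}_{k-1})^{1/2} = (1-p_k^*)^{1/2}$.
\item By Assumption~\ref{ass:ProbAccEstimates:obj}(2), the second factor is at most $(\varepsilon_v^2\Delta_k^4)^{1/2} = \varepsilon_v\Delta_k^2$. Here $\Delta_k^2$ can be taken out as a constant because $\Delta_k$ is $\mathcal{F}_{k-1}$-measurable.
\end{itemize}

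This yields the bound $(1-p_k^*)^{1/2}\varepsilon_v\Delta_k^2$ for each term. The final inequality follows from $p_k^* \ge p$ in Definition~\ref{defi:probaccfun}, which gives $(1-p_k^*)^{1/2} \le (1-p)^{1/2}$.

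The only step needing care is the validity of conditional Cauchy--Schwarz for these random variables. The error terms are square-integrable conditionally, by the variance condition together with the integrability remark made before the lemma, so the conditional expectations are well defined and the inequality holds almost surely.
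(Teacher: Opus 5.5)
Your proposal is correct and takes the same route as the paper: conditional Cauchy--Schwarz, the identity $\mathbb{E}(\mathbf{1}_{\overline{T^f_k}}\mid\mathcal{F}_{k-1}) = 1-p_k^*$, the variance condition of Assumption~\ref{ass:ProbAccEstimates:obj}(2), and $p_k^*\ge p$. You simply write out the steps that the paper's one-line proof delegates to Paquette and Scheinberg.
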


\begin{proof}
The result follows from the conditional Cauchy--Schwarz inequality, $\mathbb{E}(\mathbf{1}_{\overline{T^f_k}}|\mathcal{F}_{k-1}) = 1-p_k^*$ and Assumption~\ref{ass:ProbAccEstimates:obj} (2) as done in~\cite{paquette2020stochastic}.
\end{proof}

The next lemma quantifies the decrease of the Lyapunov function on the event where the
estimates are accurate.
\begin{lemma} \label{lm:2}
Let Assumptions~\ref{ass:f} and~\ref{ass:ProbAccEstimates:obj} hold and let $\nu$ satisfy~\eqref{eq:nu}. Then, for every iteration $k$, one has
\[
\mathbb{E}\left(\mathbf{1}_{T^f_k} \left(\Phi_{k+1} - \Phi_k\right) \big| \mathcal{F}_{k-1}  \right)
\le - (1 - \nu)(1- \beta_0^{2} )p_k^*\Delta_k^2 \le - (1 - \nu)(1- \beta_0^{2} )p\,\Delta_k^2,
\]
where $\mathbf{1}_{T^f_k}$ is the indicator function of the event set $T^f_k$.
\end{lemma}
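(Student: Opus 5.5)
The plan is a pathwise argument: show that on every sample path in $T^f_k$ one has
\[
\Phi_{k+1}-\Phi_k \le -(1-\nu)(1-\beta_0^2)\Delta_k^2,
\]
then multiply by $\mathbf{1}_{T^f_k}$ and take conditional expectations. Since $\Delta_k$ is $\mathcal{F}_{k-1}$-measurable and $\mathbb{E}(\mathbf{1}_{T^f_k}\mid\mathcal{F}_{k-1})=p_k^*\ge p$, this gives both inequalities. The pathwise bound comes from a case split over the four outcomes of Algorithm~\ref{alg:framework}: successful (global or local), certain unsuccessful, uncertain unsuccessful, and empty. On $T^f_k$ we use $|f_k-f(x_k)|\le\varepsilon_f\delta_k^2$ and $|f_k^s-f(x_k^s)|\le\varepsilon_f\delta_k^2$, where by \eqref{xs} the trial point $x_k^s$ is the one tested last.

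\textbf{Successful iterations.} Here $x_{k+1}=x_k^s$ and the test gives $f_k^s\le f_k-\frac{\kappa}{2}\delta_k^2$. On $T^f_k$ this yields
\[
f(x_{k+1})-f(x_k)\le -\tfrac{\kappa}{2}\delta_k^2+2\varepsilon_f\delta_k^2\le -\tfrac{\kappa}{4}\delta_k^2,
\]
using $\varepsilon_f\le\kappa/8$. A global success also counts: there $x_k^s=x_k^{\glb}$, which is the point covered by $T^f_k$. Hence
\[
\Phi_{k+1}-\Phi_k\le \Big(-\nu\tfrac{\kappa}{4}+(1-\nu)(\gamma^2-1)\Big)\delta_k^2 .
\]
By \eqref{eq:nu}, $\nu\kappa/8\ge(1-\nu)(\gamma^2-1)$ and $\nu\kappa/8\ge(1-\nu)(1-\beta_0^2)$. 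Splitting $\nu\kappa/4$ into two halves therefore gives $\Phi_{k+1}-\Phi_k\le -(1-\nu)(1-\beta_0^2)\delta_k^2$.

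\textbf{Unsuccessful and empty iterations.} Here $x_{k+1}=x_k$, so the $f$-term cancels and
\[
\Phi_{k+1}-\Phi_k=(1-\nu)(\beta_i^2-1)\delta_k^2 \quad\text{with } i\in\{0,1,2\}.
\]
Since $\beta_2\le\beta_1\le\beta_0$, each case satisfies $\beta_i^2-1\le\beta_0^2-1$, which gives the same bound. Accuracy is not needed in these cases.

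The main obstacle is confirming that the successful case really uses the estimate covered by $T^f_k$. If a global success occurs, $F_k^s$ must be the estimate at $x_k^{\glb}$; if the local phase runs, it must be the estimate at $x_k^{\lcl}$. Definition~\ref{defi:probaccfun} only controls $F_k^s$ at $X_k+S_k=x_k^s$. This is consistent with \eqref{xs}, because a global success means the global phase was not declared unsuccessful. The global estimate in a failed global phase plays no role, since a failed global test only sends the iteration to the local branch. Once this bookkeeping is settled, the rest is the constant algebra driven by \eqref{eq:nu}, which is exactly what the factor $8/\kappa$ there is designed for.
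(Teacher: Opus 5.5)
Your proposal is correct and matches the paper's proof step for step. Like the paper, you prove a pathwise bound on $T^f_k$ by splitting into two cases. Successful iterations use $\varepsilon_f\le\kappa/8$ and \eqref{eq:nu} to reach $-\nu\frac{\kappa}{8}\delta_k^2\le-(1-\nu)(1-\beta_0^2)\delta_k^2$. Unsuccessful and empty iterations use $\beta_2\le\beta_1\le\beta_0$. You then condition on $\mathcal{F}_{k-1}$, with $\Delta_k$ measurable and $\mathbb{E}(\mathbf{1}_{T^f_k}\mid\mathcal{F}_{k-1})=p_k^*\ge p$. Your check via \eqref{xs} that $F_k^s$ is the estimate at the point tested last is left implicit in the paper, and it is consistent with its setup.
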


\begin{proof}
Consider a realization of a given iteration $k$ of Algorithm~\ref{alg:framework} for which the objective function estimates $\{f_k, f_k^{s}\}$ are $\varepsilon_f$-accurate.
If the iteration is unsuccessful, then $x_{k+1} = x_k$ and $\delta_{k+1} \le \max\{\beta_2,\beta_1,\beta_0\}\,\delta_k = \beta_0\,\delta_k$, since $\beta_2\le\beta_1\leq\beta_0<1$, which leads to
\begin{equation} \label{eq:DeltaPhiUnsuccessful}
\phi_{k+1} - \phi_{k} \le (1 - \nu)(\beta_0^{2} - 1)\delta_k^2 := b_1 < 0.
\end{equation}
Otherwise, if the iteration is successful, then one has
$x_{k+1} = x^{s}_{k} = x_k + \delta_k q_k$,
$f_k^s - f_k \le -\frac{\kappa}{2}\delta_k^2$ and $\delta_{k+1} = \gamma \delta_k$.
One thus has
\begin{align*}
\phi_{k+1} - \phi_k
&= \nu \left( f(x_{k+1}) - f(x_k) \right) + (1 - \nu)(\gamma^2 - 1)\delta_k^2 \\
&\le \left(\nu \left(2\varepsilon_f - \frac{\kappa}{2}\right) + (1 - \nu)(\gamma^2 - 1) \right) \delta_k^2 \\
&\le \left(- \nu \frac{\kappa}{4} + (1 - \nu)(\gamma^2 - 1) \right) \delta_k^2,
\end{align*}
where the last inequality is obtained using the fact that $\varepsilon_f \le \frac{\kappa}{8}$. Using~\eqref{eq:nu}, we have $\frac{\nu}{1-\nu} \ge \frac{8(\gamma^2 - 1)}{\kappa}$, which implies
\begin{align*}
\phi_{k+1} - \phi_k \le - \nu \frac{\kappa}{8} \delta_k^2 := b_2 < 0.
\end{align*}
We note also that ~\eqref{eq:nu} gives $\frac{\nu}{1 - \nu} \ge \frac{8}{\kappa}(1 - \beta_0^{2})$ and hence $b_2 \le b_1$. Hence, one deduces that for any iteration $k$, on this event, one has
\begin{align*}
\mathbf{1}_{T^f_k}\left(\Phi_{k+1} - \Phi_k\right) \le -\mathbf{1}_{T^f_k}(1 - \nu)(1- \beta_0^{2})\Delta_k^2 .
\end{align*}
Taking the conditional expectation with respect to $\mathcal{F}_{k-1}$ one gets
\[
\mathbb{E}\left(\mathbf{1}_{T^f_k} \left(\Phi_{k+1} - \Phi_k\right) \big| \mathcal{F}_{k-1}  \right)
\le - (1 - \nu)(1- \beta_0^{2} )p_k^*\Delta_k^2 \le - (1 - \nu)(1- \beta_0^{2} )p\,\Delta_k^2.
\]
\end{proof}
We now turn to the event where the estimates are inaccurate, on which the Lyapunov function may increase.

\begin{lemma} \label{lm:3}
Let Assumptions~\ref{ass:f} and~\ref{ass:ProbAccEstimates:obj} hold and let $\nu$ satisfy~\eqref{eq:nu}. Then, for every iteration $k$, one has
\[
\mathbb{E}\left(\mathbf{1}_{\overline{T^f_k}}(\Phi_{k+1} - \Phi_k) \big| \mathcal{F}_{k-1} \right)
\le 2 \nu (1-p_k^*)^{1/2} \varepsilon_v \Delta_k^2 \le 2 \nu (1-p)^{1/2} \varepsilon_v \Delta_k^2,
\]
where $\mathbf{1}_{\overline{T^f_k}}$ is the indicator function of the event set
$\overline{T^f_k}$, i.e., the indicator over the set of inaccurate estimates
$(F_k, F_k^{s})$.
\end{lemma}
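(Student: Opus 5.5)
We bound the increment of $\Phi_k$ pathwise on the inaccurate event $\overline{T^f_k}$, splitting into unsuccessful and successful iterations, and then take conditional expectations using Lemma~\ref{lm:1}.

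\textbf{Unsuccessful iterations.} On an unsuccessful iteration (certain, uncertain or empty), $x_{k+1}=x_k$ and $\delta_{k+1}\le\beta_0\delta_k<\delta_k$. The $f$-part of $\Phi$ is therefore unchanged and the $\Delta^2$-part decreases, so $\phi_{k+1}-\phi_k\le 0$. This holds regardless of the accuracy of the estimates.

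\textbf{Successful iterations.} On a successful iteration, $x_{k+1}=x_k+s_k$, $\delta_{k+1}=\gamma\delta_k$, and the sufficient-decrease test gives $f_k^s\le f_k-\frac{\kappa}{2}\delta_k^2$. Writing $f(x_{k+1})-f(x_k)=(f(x_k+s_k)-f_k^s)+(f_k^s-f_k)+(f_k-f(x_k))$, we obtain
\[
\phi_{k+1}-\phi_k\le \nu\Big(|f_k^s-f(x_k+s_k)|+|f_k-f(x_k)|\Big)-\nu\frac{\kappa}{2}\delta_k^2+(1-\nu)(\gamma^2-1)\delta_k^2 .
\]
By~\eqref{eq:nu}, $(1-\nu)(\gamma^2-1)\le \nu\frac{\kappa}{8}$, so the last two terms sum to at most $-\nu\frac{3\kappa}{8}\delta_k^2\le 0$ and can be dropped. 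Combining both cases, on every iteration we have
\[
\mathbf{1}_{\overline{T^f_k}}(\Phi_{k+1}-\Phi_k)\le \nu\,\mathbf{1}_{\overline{T^f_k}}\Big(|F_k^s-f(X_k+S_k)|+|F_k-f(X_k)|\Big),
\]
where we use that the right-hand side is nonnegative to cover the unsuccessful case as well.

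\textbf{Conditional expectation.} Taking the conditional expectation with respect to $\mathcal{F}_{k-1}$, using that $\Delta_k$ is $\mathcal{F}_{k-1}$-measurable, and applying Lemma~\ref{lm:1} to each of the two terms bounds each by $(1-p_k^*)^{1/2}\varepsilon_v\Delta_k^2$. This gives the total $2\nu(1-p_k^*)^{1/2}\varepsilon_v\Delta_k^2\le 2\nu(1-p)^{1/2}\varepsilon_v\Delta_k^2$, since $p_k^*\ge p$.

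\textbf{Main obstacle.} The delicate step is the successful case, where the $\Delta^2$-increase $(1-\nu)(\gamma^2-1)\delta_k^2$ must be absorbed. This works only because the test itself provides $-\frac{\kappa}{2}\delta_k^2$ in the estimates and~\eqref{eq:nu} makes $\nu\kappa/8$ dominate. It is also necessary to check integrability (already noted in the text) so that splitting the conditional expectations is legitimate.
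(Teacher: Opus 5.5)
Your proposal is correct and follows essentially the same argument as the paper. Both bound the increment pathwise on $\overline{T^f_k}$, show it is nonpositive on unsuccessful (certain, uncertain or empty) iterations, and on successful iterations absorb $(1-\nu)(\gamma^2-1)\Delta_k^2$ into $-\nu\frac{\kappa}{2}\Delta_k^2$ via \eqref{eq:nu}; the nonnegative bound is then merged across both cases and Lemma~\ref{lm:1} is applied to each error term after taking conditional expectations.
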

\begin{proof}
Conditioned on $\mathcal{F}_{k-1}$, assume that the objective function estimates
$\{F_k, F_k^{s}\}$ are inaccurate. If the iteration $k$ is successful, one has $X_{k+1} = X_k + S_k$, $F_k^s - F_k \le -\frac{\kappa}{2}\Delta_k^2$ and $\Delta_{k+1} = \gamma \Delta_k$. Hence,
\begin{align*}
f(X_{k+1}) - f(X_k)
&= \left(f(X_k+S_k) - F_k^s\right) + \left(F_k^s - F_k\right) + \left(F_k - f(X_k)\right) \\
&\le \left|f(X_k+S_k) - F_k^s\right| + \left|F_k - f(X_k)\right| - \frac{\kappa}{2}\Delta_k^2,
\end{align*}
which leads to
\begin{align*}
\mathbf{1}_{\overline{T^f_k}}\mathbf{1}_{S}(\Phi_{k+1} - \Phi_k)
&\le \mathbf{1}_{\overline{T^f_k}}\mathbf{1}_{S} \Big( \nu \left|f(X_k+S_k) - F_k^s\right| + \nu \left|F_k - f(X_k)\right| \\
&\qquad\qquad\quad + \left(- \nu\frac{\kappa}{2} + (1-\nu)(\gamma^2-1)\right)\Delta_k^2 \Big).
\end{align*}
Using~\eqref{eq:nu}, we have $(1-\nu)(\gamma^2 - 1) \le \nu\frac{\kappa}{8} \le \nu\frac{\kappa}{2}$. Hence, one gets
\begin{equation} \label{eq:badsucc}
\mathbf{1}_{\overline{T^f_k}}\mathbf{1}_{S}(\Phi_{k+1} - \Phi_k)
\le \mathbf{1}_{\overline{T^f_k}}\mathbf{1}_{S}\, \nu \left( \left|f(X_k+S_k) - F_k^s\right| + \left|F_k - f(X_k)\right| \right).
\end{equation}
If the iteration is unsuccessful (certain, uncertain or empty), then $X_{k+1} = X_k$ and $\Delta_{k+1} \le \beta_0 \Delta_k$, which leads to
\begin{equation} \label{eq:badunsucc}
\mathbf{1}_{\overline{T^f_k}}\mathbf{1}_{\overline{S}}(\Phi_{k+1} - \Phi_k)
\le -\mathbf{1}_{\overline{T^f_k}}\mathbf{1}_{\overline{S}} (1 - \nu)(1-\beta_0^{2})\Delta_k^2 \le 0 .
\end{equation}
Since the right-hand side of~\eqref{eq:badsucc} is nonnegative, combining~\eqref{eq:badsucc} and~\eqref{eq:badunsucc} yields, in both cases,
\[
\mathbf{1}_{\overline{T^f_k}}(\Phi_{k+1} - \Phi_k)
\le \mathbf{1}_{\overline{T^f_k}}\, \nu \left( \left|f(X_k+S_k) - F_k^s\right| + \left|F_k - f(X_k)\right| \right).
\]
Taking the conditional expectation with respect to $\mathcal{F}_{k-1}$ and applying Lemma~\ref{lm:1}, one obtains
\begin{equation*}
\mathbb{E}\left(\mathbf{1}_{\overline{T^f_k}}(\Phi_{k+1} - \Phi_k) \big| \mathcal{F}_{k-1} \right)
\le 2 \nu (1-p_k^*)^{1/2} \varepsilon_v \Delta_k^2 \le 2 \nu (1-p)^{1/2} \varepsilon_v \Delta_k^2.
\end{equation*}
\end{proof}

Putting Lemmas~\ref{lm:2} and~\ref{lm:3} together yields the expected decrease of the Lyapunov function, provided that the estimates are accurate with a sufficiently high probability.
\begin{theorem} \label{thm:DecreaseOnPhi}
Let Assumptions~\ref{ass:f} and~\ref{ass:ProbAccEstimates:obj} hold, where the probability $p$ satisfies
\begin{equation} \label{eq:condp}
4\nu \varepsilon_v (1-p)^{1/2} \le (1-\nu)(1-\beta_0^{2})\, p ,
\end{equation}
and let $\nu \in (0, 1)$ satisfy~\eqref{eq:nu}. Then, there exists an $\alpha>0$ such that, for all
$k$,
\begin{equation} \label{eq:MainGoal}
\mathbb{E}\left(\Phi_{k+1} - \Phi_k \big| \mathcal{F}_{k-1} \right) \leq - \alpha \Delta_k^2.
\end{equation}
\end{theorem}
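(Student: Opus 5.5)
The plan is to split the conditional expectation along the accuracy event $T^f_k$ and its complement, and bound each piece with Lemmas~\ref{lm:2} and~\ref{lm:3}. Since $\mathbf{1}_{T^f_k}+\mathbf{1}_{\overline{T^f_k}}=1$ and $\Phi_k,\Phi_{k+1}$ are integrable, linearity of conditional expectation gives
\[
\mathbb{E}\left(\Phi_{k+1}-\Phi_k \big| \mathcal{F}_{k-1}\right)
= \mathbb{E}\left(\mathbf{1}_{T^f_k}(\Phi_{k+1}-\Phi_k) \big| \mathcal{F}_{k-1}\right)
+ \mathbb{E}\left(\mathbf{1}_{\overline{T^f_k}}(\Phi_{k+1}-\Phi_k) \big| \mathcal{F}_{k-1}\right).
\]

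Applying Lemma~\ref{lm:2} to the first term and Lemma~\ref{lm:3} to the second, both with the worst-case constant $p$ rather than $p_k^*$, yields
\[
\mathbb{E}\left(\Phi_{k+1}-\Phi_k \big| \mathcal{F}_{k-1}\right)
\le \left(-(1-\nu)(1-\beta_0^2)\,p + 2\nu(1-p)^{1/2}\varepsilon_v\right)\Delta_k^2 .
\]
Condition~\eqref{eq:condp} gives $2\nu\varepsilon_v(1-p)^{1/2}\le \tfrac12(1-\nu)(1-\beta_0^2)\,p$. Substituting this, the bracket is at most $-\tfrac12(1-\nu)(1-\beta_0^2)\,p$. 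So we set
\[
\alpha=\tfrac12(1-\nu)(1-\beta_0^2)\,p .
\]
This constant is strictly positive because $\nu<1$, $\beta_0<1$ and $p>\tfrac12$, and it does not depend on $k$, which gives~\eqref{eq:MainGoal}.

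The step that needs care is not the algebra but checking that the two lemmas combine cleanly. Lemma~\ref{lm:2} gives a decrease with coefficient $p_k^*$, while Lemma~\ref{lm:3} gives an increase with coefficient $(1-p_k^*)^{1/2}$. Mixing the $p$-bounds is legitimate because each lemma already states its own weakening from $p_k^*$ to $p$, and those weakenings go in the correct direction: $p_k^*\ge p$ strengthens the decrease, and $(1-p_k^*)^{1/2}\le(1-p)^{1/2}$ shrinks the increase. Hence no monotonicity argument on the function $p\mapsto -ap+b(1-p)^{1/2}$ is needed. Finally, the integrability of $\Phi_k$ established before the lemmas guarantees that splitting the conditional expectation is valid.
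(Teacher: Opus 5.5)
Your proposal is correct and follows the paper's proof: split the conditional expectation along $T^f_k$ and its complement, apply Lemmas~\ref{lm:2} and~\ref{lm:3} with their $p$-bounds, and use~\eqref{eq:condp} to absorb the error term, which gives $\alpha=\tfrac12(1-\nu)(1-\beta_0^2)\,p$. Your remarks on why mixing the $p$-weakenings is legitimate, and on why integrability lets you split the expectation, are both valid; the paper simply takes these points for granted.
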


\begin{proof}
Putting the results of the two Lemmas~\ref{lm:2} and~\ref{lm:3} together, we obtain
\begin{align*}
\mathbb{E}\left(\Phi_{k+1} - \Phi_k \big| \mathcal{F}_{k-1} \right)
&= \mathbb{E}\left((\mathbf{1}_{T^f_k} + \mathbf{1}_{\overline{T^f_k}})(\Phi_{k+1} - \Phi_k) \big| \mathcal{F}_{k-1} \right) \\
&\leq \left( - (1 - \nu)p(1- \beta_0^{2}) + 2 \nu (1-p)^{1/2} \varepsilon_v \right)\Delta_k^2 .
\end{align*}
Hence, using~\eqref{eq:condp}, it reduces to
\[
\mathbb{E}\left(\Phi_{k+1} - \Phi_k \big| \mathcal{F}_{k-1} \right)
\leq - \frac{1}{2}(1 - \nu) p (1- \beta_0^{2}) \Delta_k^2 = - \alpha \Delta_k^2,
\]
where $\alpha = \frac{1}{2}(1 - \nu)p(1- \beta_0^{2}) > 0$.
\end{proof}
Hence, Theorem~\ref{thm:DecreaseOnPhi} ensures that the Lyapunov function decreases over iterations in expectation, as long as the accuracy probability of the estimates of $f$ is high enough. Using such a result, one can guarantee that the whole sequence of step sizes $\{\Delta_k\}$ converges to zero almost surely.
\begin{corollary} \label{corol:StepSizeToZeroAlmostSurely}
Suppose that the working assumptions of Theorem~\ref{thm:DecreaseOnPhi} are satisfied. Then, the sequence $\{\Delta_k\}_k$ goes to $0$ almost surely. Moreover, if the sequence of iterates $\{X_k\}$ is bounded, then there exist a subsequence $\mathcal{K}$ and a random vector $X_* \in \Omega$ such that $\{\Delta_k\}_{k \in \mathcal{K}}$ goes to zero and $\{X_k\}_{k \in \mathcal{K}}$ converges almost surely to $X_* \in \Omega$.
\end{corollary}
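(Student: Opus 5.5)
The plan is the standard supermartingale telescoping argument. First I will make $\Phi_k$ bounded below. By Assumption~\ref{ass:f}, $\Phi_k \ge \nu f_{\mathrm{low}}$, so $\Psi_k := \Phi_k - \nu f_{\mathrm{low}} \ge 0$, and $\Psi_k$ is integrable because $\Phi_k$ is. Theorem~\ref{thm:DecreaseOnPhi} gives
\[
\mathbb{E}(\Psi_{k+1}\mid\mathcal{F}_{k-1}) \le \Psi_k - \alpha\Delta_k^2 .
\]
I will use this in two ways.

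The first use is a direct summation. Taking total expectations and summing from $k=0$ to $N$ gives
\[
\alpha\sum_{k=0}^{N}\mathbb{E}(\Delta_k^2) \le \mathbb{E}(\Psi_0) - \mathbb{E}(\Psi_{N+1}) \le \mathbb{E}(\Psi_0) < \infty .
\]
Letting $N\to\infty$ and using monotone convergence, $\mathbb{E}\big(\sum_k \Delta_k^2\big) < \infty$. Hence $\sum_k \Delta_k^2 < \infty$ almost surely, and so $\Delta_k \to 0$ almost surely. This settles the first claim.

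The second use is a sharper route that is not strictly needed. Since $\Psi_k$ is a nonnegative supermartingale with respect to the shifted filtration ($\Psi_k$ is $\mathcal{F}_{k-1}$-measurable because $X_k$ and $\Delta_k$ are), it converges almost surely. The step I expect to need the most care is measurability and integrability: $\Psi_k$ must be adapted to $\mathcal{F}_{k-1}$ and $\Psi_0$ must be integrable. The paper has already argued integrability, and adaptedness follows from the stated measurability of $X_k$ and $\Delta_k$. The pure expectation-sum argument avoids martingale convergence entirely, so that is the route I will use.

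For the second claim, I fix an $\omega$ in the full-measure event where $\Delta_k(\omega)\to 0$, and assume $\{X_k(\omega)\}$ is bounded. By Bolzano--Weierstrass there is a subsequence $\mathcal{K}$ along which $X_k(\omega)$ converges to some limit $x_*$. Along that subsequence $\Delta_k(\omega)\to 0$ automatically. Since every $X_k\in\Omega$ by construction and $\Omega$ is closed, $x_*\in\Omega$.

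To make $X_*$ a random vector, I will select the subsequence measurably. The plan is to define $X_* := \liminf$ taken componentwise along a lexicographic procedure, or more simply to take, for each coordinate, a measurable choice via $\limsup$ arguments. If that becomes fiddly, I will state the result pathwise (the subsequence depending on $\omega$), which is the standard interpretation in this literature~\cite{audet2019stomads,diouane2022globally}. I do not expect this to cause trouble.
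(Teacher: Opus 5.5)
Your proposal is correct and follows the paper's proof: you telescope the conditional decrease of Theorem~\ref{thm:DecreaseOnPhi} in total expectation, use $\Phi_k \ge \nu f_{\mathrm{low}}$ to obtain $\mathbb{E}\big(\sum_k \Delta_k^2\big)<\infty$ and hence $\Delta_k\to 0$ almost surely, and then apply Bolzano--Weierstrass pathwise together with closedness of $\Omega$. One caution about your optional aside on measurability: taking coordinatewise $\liminf$ or $\limsup$ independently need not give a cluster point of $\{X_k\}$ (for example, $X_k$ alternating between $(0,1)$ and $(1,0)$), so only the nested lexicographic construction, or the pathwise reading used in the paper, is valid.
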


\begin{proof}
By taking expectation on the result from Theorem~\ref{thm:DecreaseOnPhi}, one gets
\begin{align*}
\mathbb{E}\left(\Phi_{n+1} - \Phi_0 \right)
&= \sum^{n}_{k=0} \mathbb{E}\left(\Phi_{k+1} - \Phi_k \right)
= \sum^{n}_{k=0} \mathbb{E}\left( \mathbb{E}\left(\Phi_{k+1} - \Phi_k \big| \mathcal{F}_{k-1} \right) \right) \\
&\le - \alpha \mathbb{E}\left( \sum^{n}_{k=0} \Delta_k^2 \right).
\end{align*}
Since $X_{n+1}\in\Omega$, one has $\Phi_{n+1} \geq \nu f_{\mathrm{low}}$ and by taking $n \to \infty$,
\begin{equation*}
\mathbb{E}\left( \sum^{+\infty}_{k=0} \Delta_k^2 \right) < \infty.
\end{equation*}
Thus, we conclude that the probability of the random variable $\Delta_k$ converging to zero is one. Moreover, assuming the boundedness of the sequence $\{X_k\}$, one deduces the existence of a random vector $X_*$ and a subsequence $\mathcal{K} \subset \mathbb{N}$ such that $\{\Delta_k\}_{k \in \mathcal{K}}$ goes to zero almost surely and
$\{X_k\}_{k \in \mathcal{K}} \subset \Omega$ converges almost surely to $X_* \in \Omega$ (as $\Omega$ is a closed set).
\end{proof}

Corollary~\ref{corol:StepSizeToZeroAlmostSurely} shows that the event
$A_\Delta=\{\lim_{k\to\infty}\Delta_k=0\}$ is almost sure. On this event, the
trust region eventually contains at least one point, so empty iterations can only occur finitely often. We now state this result formally.
\begin{proposition}\label{prop:TRnonempty}
Let the assumptions of Corollary~\ref{corol:StepSizeToZeroAlmostSurely} hold. Then, for every
$\omega\in A_\Delta$, one has $\overline{\Omega}_k\neq\emptyset$ for all $k$ sufficiently large. In particular, the set $\mathcal{U}_{\emptyset}$ of empty iterations is finite almost surely.
\end{proposition}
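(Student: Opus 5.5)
We need to show: for $\omega\in A_\Delta$, the annulus $\overline{\Omega}_k=\{x\in\Omega : r_{\min}\delta_k\le\|x-x_k\|\le r_{\max}\delta_k\}$ is nonempty for all large $k$. The plan is purely geometric and uses two facts: $\Omega$ is closed, convex and has nonempty interior, and $x_k\in\Omega$ for every $k$, as established after \eqref{xs}.

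\textbf{Step 1 (a uniform radius).} Since $\Omega$ is convex with nonempty interior, it is not a singleton, so it has positive diameter. More precisely, for any $x\in\Omega$, pick an interior point $y_0\in\Omega$ with $y_0\neq x$; if $x$ is itself interior, pick any other point of the open ball around $x$ contained in $\Omega$. The segment $[x,y_0]$ lies in $\Omega$ by convexity, and its length $\|y_0-x\|>0$. A finer observation covers every $x\in\Omega$ simultaneously. Fix an interior point $c$ with $B(c,\rho)\subset\Omega$. For every $x\in\Omega$ there is a point $y\in B(c,\rho)$ with $\|y-x\|\ge\rho$: if $\|c-x\|\ge\rho$ take $y=c$; otherwise take $y=c+\rho' u$ with $u$ a unit vector chosen so that $\|y-x\|\ge\rho'$, for any $\rho'<\rho$. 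Hence
\[
\sup_{y\in\Omega}\|y-x\|\ge\rho/2=:R>0 \quad\text{for all } x\in\Omega .
\]
The radius $R$ is uniform and deterministic.

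\textbf{Step 2 (intermediate value along a segment).} Fix $\omega\in A_\Delta$. Since $\delta_k\to0$, there is $k_0$ such that $r_{\max}\delta_k\le R$ for all $k\ge k_0$. For such $k$, choose $y\in\Omega$ with $\|y-x_k\|\ge R\ge r_{\max}\delta_k$. The map $t\mapsto\|x_k+t(y-x_k)-x_k\|=t\|y-x_k\|$ is continuous on $[0,1]$ and takes the values $0$ and $\|y-x_k\|\ge r_{\max}\delta_k$. It therefore takes every value in $[r_{\min}\delta_k,r_{\max}\delta_k]$. The corresponding point lies in $\Omega$ by convexity, so it belongs to $\overline{\Omega}_k$. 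This argument applies for any norm, including the infinity norm of Figure~\ref{fig:tr-cases}. Hence $\overline{\Omega}_k\neq\emptyset$ for all $k\ge k_0$.

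\textbf{Step 3 (finiteness of $\mathcal{U}_\emptyset$).} An index belongs to $\mathcal{U}_\emptyset$ only if $\overline{\Omega}_k=\emptyset$, so $\mathcal{U}_\emptyset(\omega)\subset\{0,\dots,k_0-1\}$ is finite for every $\omega\in A_\Delta$. Since $\mathbb{P}(A_\Delta)=1$ by Corollary~\ref{corol:StepSizeToZeroAlmostSurely}, $\mathcal{U}_\emptyset$ is finite almost surely.

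The only delicate point is Step 1: the radius must be independent of $k$, because the iterates $x_k$ may move, for instance toward a corner of $\Omega$. Using a ball inside $\Omega$ rather than the local geometry at $x_k$ resolves this. No boundedness of $\{X_k\}$ is needed.
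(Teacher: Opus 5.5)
Your proof is correct and follows essentially the paper's route. Both arguments take a $k$-independent lower bound on $\sup_{y\in\Omega}\|y-x\|$ over all $x\in\Omega$ (the paper uses $\frac12\operatorname{diam}(\Omega)$ via the triangle inequality, you use the radius of a ball contained in $\Omega$), use convexity along a segment from $x_k$ to get a point of $\Omega$ at distance $r_{\min}\delta_k$ (the paper writes this point explicitly, you invoke the intermediate value theorem), and conclude that $\mathcal{U}_\emptyset$ is finite on $A_\Delta$. Your requirement $r_{\max}\delta_k\le R$ is stronger than needed, since $r_{\min}\delta_k<R$ suffices, but it is harmless because $\delta_k\to0$.
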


\begin{proof}
Let $\omega\in A_\Delta$. Since $\Omega$ has a nonempty interior,
$\operatorname{diam}(\Omega)>0$ and, by the triangle inequality,
$\sup_{y\in\Omega}\|y-x\|\ge\frac12\operatorname{diam}(\Omega)$ for every $x\in\Omega$.
As $\delta_k\to0$, there exists $k_1$ such that
$r_{\min}\delta_k<\frac12\operatorname{diam}(\Omega)$ for all $k\ge k_1$. For such $k$,
there exists $\bar y\in\Omega$ with $\|\bar y-x_k\|>r_{\min}\delta_k$. By convexity of
$\Omega$, the point
\[
y=x_k+\frac{r_{\min}\delta_k}{\|\bar y-x_k\|}\,(\bar y-x_k)
\]
lies on the segment $[x_k,\bar y]\subseteq\Omega$ and satisfies
$\|y-x_k\|=r_{\min}\delta_k\le r_{\max}\delta_k$. Hence $y\in\overline{\Omega}_k$ and
$\overline{\Omega}_k\neq\emptyset$ for all $k\ge k_1$. Consequently, $\mathcal{U}_{\emptyset}(\omega)\subseteq\{0,\ldots,k_1-1\}$.
\end{proof}

Before stating our main result, we recall an auxiliary result from the literature \cite{durrett2019probability} that will be useful for the analysis.

\begin{lemma} \label{submartingal:lm}
Let $\{G_k\}$ be a submartingale with respect to a filtration $\{\mathcal{F}_{k}\}$, i.e., $\mathbb{E}(G_k \big| \mathcal{F}_{k-1} ) \ge G_{k-1}$ for all $k$.  Assume further that there exists $M>0$ such that $|G_{k} -G_{k-1}| \le M <\infty$, for all $k$. Consider the random events
$C=\{\lim_{k\to \infty} G_k ~\mbox{exists and is finite}\}$ and $D=\{\lim \sup_{k\to \infty} G_k = \infty\}$. Then $\mathbb{P}(C\cup D)=1$.
\end{lemma}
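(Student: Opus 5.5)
The plan is to prove the lemma directly with a stopping-time truncation combined with Doob's submartingale convergence theorem. The key observation is that the complement of $D$ is the event $\{\limsup_{k\to\infty} G_k<\infty\}$. Since every $G_k$ is finite (real-valued), this event coincides with $\{\sup_k G_k<\infty\}=\bigcup_{L\in\mathbb{N}}\{\sup_k G_k\le L\}$. It therefore suffices to show that, for each fixed $L\in\mathbb{N}$, the sequence $G_k$ converges to a finite limit almost surely on $\{\sup_k G_k\le L\}$. A countable union of null sets then gives $\mathbb{P}(\overline{D}\setminus C)=0$, that is, $\mathbb{P}(C\cup D)=1$.

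Fix $L$ and define the stopping time $N_L=\inf\{k:\ G_k>L\}$, with $\inf\emptyset=+\infty$. This is a stopping time with respect to $\{\mathcal{F}_k\}$ because $G_k$ is adapted. The stopped process $G_{k\wedge N_L}$ is again a submartingale by the optional stopping theorem for bounded stopping times (Durrett~\cite{durrett2019probability}).

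The bounded-increment hypothesis is used in the next step. For $k<N_L$ one has $G_k\le L$. At $k=N_L$ the process has exceeded $L$ by at most one increment, so $G_{N_L}\le G_{N_L-1}+M\le L+M$. Hence $G_{k\wedge N_L}\le L+M$ for all $k$, and in particular $\sup_k\mathbb{E}\big(G_{k\wedge N_L}^+\big)\le L+M+|G_0|$. I will assume here that $G_0$ is integrable, which is implicit in the submartingale definition. The edge case $N_L=0$, where $G_0>L$ already, is handled the same way, since the stopped process is then constant.

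Doob's convergence theorem then yields that $G_{k\wedge N_L}$ converges almost surely to a limit $G_\infty^{L}$ with $\mathbb{E}|G_\infty^{L}|<\infty$, so the limit is finite almost surely. On the event $\{\sup_k G_k\le L\}$ we have $N_L=\infty$, so $G_k=G_{k\wedge N_L}$ for every $k$ and $G_k$ converges to a finite limit there. Taking the union over $L$ completes the argument.

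The only delicate point is the overshoot bound $G_{k\wedge N_L}\le L+M$. This is exactly where $|G_k-G_{k-1}|\le M$ is needed: without it the stopped process need not be bounded above, and the $L^1$-boundedness required by Doob's theorem could fail. Everything else is standard martingale machinery. Alternatively, one could pass through the Doob decomposition $G_k=M_k+A_k$. There the predictable increments satisfy $0\le A_k-A_{k-1}=\mathbb{E}(G_k-G_{k-1}\mid\mathcal{F}_{k-1})\le M$, so $M_k$ has increments bounded by $2M$, and one would then apply the martingale version of the same result. The direct route avoids this detour.
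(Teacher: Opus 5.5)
Your proof is correct and complete. The paper gives no proof of this lemma and only cites Durrett~\cite{durrett2019probability}; your argument (stopping at $N_L=\inf\{k: G_k>L\}$, the overshoot bound $G_{N_L}\le G_{N_L-1}+M\le L+M$ from the bounded increments, Doob's convergence theorem for the stopped submartingale, then a union over $L\in\mathbb{N}$) is the standard textbook proof of that cited result, essentially Durrett's truncation argument for bounded-increment martingales run from above for submartingales. The only slip is cosmetic: the pointwise bound $G_{k\wedge N_L}\le L+M$ can fail when $N_L=0$ and $G_0>L+M$, but the bound $\sup_k\mathbb{E}\bigl(G_{k\wedge N_L}^+\bigr)\le L+M+\mathbb{E}|G_0|$ that you actually use still holds.
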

For $k \in \mathcal{U}$, we define the random variable $\Psi_k$ with realizations
\[
\psi_k = \frac{f(x_k) - f(x_k + \delta_k q_k)}{\delta_k}, \qquad k \in \mathcal{U}.
\]

\begin{lemma} \label{lm:liminf}
Let the working assumptions of Corollary~\ref{corol:StepSizeToZeroAlmostSurely} hold. Then, almost surely, the set $\mathcal{U}$ of iterations where the local phase is performed is infinite and
\[
\liminf_{k \in \mathcal{U}} \Psi_k \le 0 .
\]
\end{lemma}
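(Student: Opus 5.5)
This follows the StoMADS-type argument (Audet et al.) and has two parts: first show that $\mathcal{U}$ is infinite, then show the liminf bound. Both use Corollary~\ref{corol:StepSizeToZeroAlmostSurely} ($\Delta_k\to0$ a.s.) together with the submartingale dichotomy of Lemma~\ref{submartingal:lm}, applied to a counting process built from the step-size updates.

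\emph{$\mathcal{U}$ is infinite.} Work on $A_\Delta$.
\begin{itemize}
\item Every successful iteration multiplies $\Delta_k$ by $\gamma>1$. If $\mathcal{U}$ were finite, every unsuccessful iteration would eventually be an empty one.
\item Proposition~\ref{prop:TRnonempty} says empty iterations are finite almost surely. So all sufficiently late iterations would be successful, and $\Delta_k$ would grow geometrically. This contradicts $\Delta_k\to0$.
\end{itemize}
Hence $\mathcal{U}$ is infinite a.s. The same reasoning shows that infinitely many iterations are unsuccessful.

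\emph{The liminf bound.} Argue by contradiction. Suppose that on an event $E$ of positive probability there exist a random $\epsilon>0$ and an index $k_0$ with $\Psi_k\ge\epsilon$ for all $k\in\mathcal{U}$, $k\ge k_0$. Then for such $k$,
\[
f(x_k)-f(x_k+s_k)\ge\epsilon\delta_k\ge\kappa\delta_k^2
\]
once $\delta_k\le\epsilon/\kappa$, which holds eventually since $\delta_k\to0$. Now suppose the estimates are accurate ($T^f_k$ holds) with $\varepsilon_f\le\kappa/8$. Then
\[
f^s_k-f_k\le -\kappa\delta_k^2+2\varepsilon_f\delta_k^2\le-\tfrac{\kappa}{2}\delta_k^2,
\]
so the local test succeeds. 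Thus for large $k$, every iteration reaching the local phase with accurate estimates is successful. Global-phase iterations are either successful or lead into $\mathcal{U}$ or $\mathcal{U}_\emptyset$, and $\mathcal{U}_\emptyset$ is eventually empty. So for large $k$ on $E$, accuracy implies success.

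Next define the submartingale
\[
G_k=\sum_{j=0}^{k}\big(\mathbf{1}_{T^f_j}-p\big).
\]
It has bounded increments and $\mathbb{E}(\mathbf{1}_{T^f_j}\mid\mathcal{F}_{j-1})\ge p$, so Lemma~\ref{submartingal:lm} applies. Combine it with a second process: take $W_k=\sum_{j}(\mathbf{1}_{T^f_j}-\tfrac12)$ or, in the style of STORM, a random walk $\log_\gamma\Delta_k$ coupled to it. Because $p>\tfrac12$ and $\gamma\beta_2\ge1$, on $E$ the increments give
\[
\log_\gamma\Delta_{k+1}-\log_\gamma\Delta_k\ge
\begin{cases}
+1 & \text{on accurate iterations},\\
\log_\gamma\beta_2\ge-1 & \text{otherwise}.
\end{cases}
\]
Hence $\log_\gamma\Delta_k$ dominates, up to a constant, the walk $\sum_j(2\mathbf{1}_{T^f_j}-1)$. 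That walk is a submartingale with bounded increments which cannot converge, since its increments are $\pm1$. Lemma~\ref{submartingal:lm} therefore forces $\limsup=\infty$ a.s. This makes $\limsup\Delta_k=\infty$ on $E$, contradicting $\Delta_k\to0$. So $\mathbb{P}(E)=0$.

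\emph{Main obstacle.} The delicate step is making this domination rigorous. One must handle the random start time $k_0$ and the random threshold $\epsilon$, restrict to $E$ while still using conditional probabilities given $\mathcal{F}_{k-1}$, and check the key inequality carefully. The needed fact is that $\gamma\beta_2\ge1$ gives $\log_\gamma\beta_2\ge-1$, so the shrink factor on the (possibly bad) unsuccessful iterations is exactly compensated by the $+1$ from $\gamma$. The plan is to take a countable union over rational $\epsilon$ and deterministic $k_0$, and on each piece compare trajectories pathwise.
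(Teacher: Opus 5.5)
Your proposal is correct and takes essentially the same route as the paper. Both arguments use the $\pm1$ submartingale $Z_k=\sum_{i\le k}(2\mathbf{1}_{T^f_i}-1)$ with $\limsup_k Z_k=\infty$ almost surely, bound the increments of $\log_\gamma\Delta_k$ from below by those of $Z_k$ pathwise on $E_\epsilon\cap A_\Delta$ (using $\gamma\beta_2\ge1$ and $\varepsilon_f\le\kappa/8$), and finish with a countable union over rational $\epsilon$.

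The differences are minor. You prove that $\mathcal{U}$ is infinite directly from $\Delta_k\to0$ and Proposition~\ref{prop:TRnonempty}, whereas the paper gets it as a by-product of $\omega\notin E_\epsilon$. Your threshold $\epsilon/\kappa$ is slightly more conservative than the paper's $2\epsilon/(\kappa+4\varepsilon_f)$. The auxiliary process $G_k$ is unnecessary, and so is the worry about conditioning on $E$, because the comparison is purely pathwise.
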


\begin{proof}
Let $Z_k=\sum^{k}_{i=0} \left(2\mathbf{1}_{T^f_i} -1\right)$, where $T^f_i$ is given in
Definition~\ref{defi:probaccfun} and recall that, by Assumption~\ref{ass:ProbAccEstimates:obj}, $p_i^*= \mathbb{P}(T^f_i \big| \mathcal{F}_{i-1} ) \ge p > \frac{1}{2}$. Since $Z_{k-1}$ is $\mathcal{F}_{k-1}$-measurable, $\{Z_k\}$ is a submartingale:
\[
\mathbb{E}(Z_k\big| \mathcal{F}_{k-1} ) = Z_{k-1} + 2 \mathbb{P}(T^f_k \big| \mathcal{F}_{k-1} ) -1 \ge Z_{k-1}.
\]
Note that $|Z_{k}- Z_{k-1}| =1$, hence the event $\{ \lim_{k\to \infty}Z_k ~\mbox{exists and is finite} \}$ is empty. Thus, by Lemma~\ref{submartingal:lm}, one deduces that the event $\{\lim \sup_{k} Z_k =\infty\}$ is almost sure. Recall also, from Corollary~\ref{corol:StepSizeToZeroAlmostSurely}, that the event $A_\Delta$ is almost sure.

Let $\epsilon>0$ be a rational number and consider the event
\[
E_{\epsilon} = \left\{ \omega \in \mathcal{E} : \exists k' \in \mathbb{N} ~\mbox{such that}~ \Psi_k(\omega) \ge \epsilon ~\mbox{for all}~ k \in \mathcal{U}(\omega),~ k \ge k' \right\}.
\]
We show that $E_\epsilon \cap A_\Delta \cap \{\lim \sup_{k} Z_k =\infty\} = \emptyset$. We proceed by contradiction and consider $\omega$ in this intersection, with the corresponding realizations $x_k=X_k(\omega)$, $\delta_k=\Delta_k(\omega)$, $q_k=Q_k(\omega)$,
$\psi_k=\Psi_k(\omega)$ and $z_k=Z_k(\omega)$. Since $\delta_k \to 0$ and by Proposition~\ref{prop:TRnonempty}, there exists $k_0 \ge k'$ such that, for all $k\ge k_0$, $k\notin\mathcal{U}_{\emptyset}$ and
$
\delta_k \le b_{\epsilon}:=\frac{2 \epsilon}{\kappa+4\varepsilon_f}$.

Let $B_k$ be the random variable whose realization is $b_k = \log_\gamma\left( \frac{\delta_k}{b_{\epsilon}}\right)$; thus, $b_k \le 0$ for all $k \ge k_0$. For any $k \ge k_0$, no empty iteration occurs and three cases remain.
\begin{itemize}
\item If $k \notin \mathcal{U}$, then, since $k\notin\mathcal{U}_{\emptyset}$, the global phase is successful, $\delta_{k+1} = \gamma\delta_k$ and $b_{k+1} = b_k + 1$.
\item If $k \in \mathcal{U}$ and $\mathbf{1}_{T^f_k} =1$, then $x^s_{k} = x^{\lcl}_{k} = x_k + \delta_k q_k$ and $f(x_k + \delta_k q_k) - f(x_k) = -\delta_k \psi_k \le -\epsilon\delta_k$. Hence, since  $\delta_k \le b_\epsilon$,
\[
f^{s}_k - f_{k} \le 2 \varepsilon_f \delta_k^2 - \epsilon \delta_k \le - \frac{\kappa}{2} \delta_k^2 .
\]
Hence, the iteration is successful, $\delta_{k+1}= \gamma \delta_k$ and $b_{k+1} = b_k + 1$.
\item If $k \in \mathcal{U}$ and $\mathbf{1}_{T^f_k} =0$, then, since
$\delta_{k+1} \ge \beta_2 \delta_k$ and $\gamma \beta_2 \ge 1$, one has $b_{k+1}\ge b_{k} + \log_\gamma (\beta_2)\ge b_{k} -1$.
\end{itemize}
In all cases, $b_{k+1} - b_k \ge 2\mathbf{1}_{T^f_k}(\omega) - 1 = z_k - z_{k-1}$ for all $k \ge k_0$. Summing these inequalities, one gets $b_k - b_{k_0} \ge z_{k-1} - z_{k_0-1}$ for all $k > k_0$. Since $\lim\sup_k z_k = \infty$, one obtains $\lim\sup_k b_k = \infty$, which contradicts the fact that $b_k \le 0$ for all $k \ge k_0$. Hence $E_\epsilon \cap A_\Delta \cap \{\lim \sup_{k} Z_k =\infty\} = \emptyset$ and since $A_\Delta$ and $\{\lim \sup_{k} Z_k =\infty\}$ are almost sure, $\mathbb{P}(E_\epsilon) = 0$.

Let $A_{\Psi} = \mathcal{E} \setminus \bigcup_{\epsilon \in \mathbb{Q}_{>0}} E_\epsilon$, which is almost sure as the complement of a countable union of null events. Consider $\omega \in A_\Psi$. If $\mathcal{U}(\omega)$ were finite, then, taking $k' > \max \mathcal{U}(\omega)$, one would have $\omega \in E_\epsilon$ for every $\epsilon$, which is impossible. Hence $\mathcal{U}(\omega)$ is infinite. Moreover, for every rational $\epsilon > 0$ and every $k'$, there exists $k \in \mathcal{U}(\omega)$, $k \ge k'$, such that $\psi_k < \epsilon$, i.e., $\liminf_{k\in\mathcal{U}(\omega)} \psi_k \le 0$.
\end{proof}
Lemma~\ref{lm:liminf} shows that, almost surely, the local phase is performed infinitely
often and fails to produce sufficient decrease along some subsequence of these iterations.
The analysis can therefore focus on the local phase iterations, and in particular on the
limit points they generate. This leads to the notions of refining subsequences and
refining directions associated with the local phase.

\begin{definition}\label{defi:refining}
Consider a realization of Algorithm~\ref{alg:framework}. A subsequence of iterates
$\{x_k\}_{k\in\mathcal{K}}$, with $\mathcal{K}\subseteq\mathcal{U}$ infinite, is called a
\emph{refining subsequence} if $\lim_{k\in\mathcal{K}}\delta_k=0$ and $\{x_k\}_{k\in\mathcal{K}}$
converges to some $x_*\in\Omega$, called a \emph{refined point}. A vector $q\in\mathbb{R}^d$
is a \emph{refining direction} for $x_*$, associated with $\mathcal{K}$, if there exists an
infinite subset $\mathcal{K}'\subseteq\mathcal{K}$ such that $\lim_{k\in\mathcal{K}'}q_k=q$.
\end{definition}

For $k\in\mathcal{K}\subseteq\mathcal{U}$, the local phase is performed at iteration
$k$, so that $x^s_k=x^{\lcl}_k\in\overline{\Omega}_k$. By definition of $\overline{\Omega}_k$ and of $q_k$,
one has $\|q_k\|=\|x^{\lcl}_k-x_k\|/\delta_k$ and hence
$r_{\min}\le\|q_k\|\le r_{\max}$.
Therefore, refining directions always exist
and since this annulus is closed, any refining direction $q$ satisfies $r_{\min} \le \|q\| \le r_{\max}$. 
Finally, since the iterates are bounded by assumption, refining
subsequences exist whenever $\mathcal{K}$ is infinite and
$\{\delta_k\}_{k\in\mathcal{K}}$ converges to zero.

Since we are in the constrained case, $y+tq$ is not guaranteed to belong to
$\Omega$ for $y$ close to $x_*$ and small $t>0$, so the classical Clarke
generalized derivative cannot be used. We now introduce the tools needed to
state our final convergence result for the constrained setting.

\begin{definition}
\label{defi:hypertangent}
A vector $q\in\mathbb{R}^d$ is a \emph{hypertangent vector} to $\Omega$ at $x\in\Omega$
if there exists $\varepsilon>0$ such that
\[
y+tu\in\Omega\qquad\forall\,y\in\Omega\cap B(x;\varepsilon),\ u\in B(q;\varepsilon),\ 0<t<\varepsilon .
\]
The set of such vectors, denoted by $T^{H}_{\Omega}(x)$, is an open cone called the
\emph{hypertangent cone}.
\end{definition}

When $f$ is Lipschitz continuous relative to $\Omega$ near $x\in\Omega$, the
Clarke--Jahn generalized derivative of $f$ at $x$ in the direction
$q\in T^{H}_{\Omega}(x)$ is defined by
\begin{equation}\label{eq:clarkejahn}
f^{\circ}(x;q):=\limsup_{\substack{y\to x,\ y\in\Omega\\ t\downarrow0,\ y+tq\in\Omega}}\frac{f(y+tq)-f(y)}{t}.
\end{equation}

We now state the main global convergence result for Algorithm~\ref{alg:framework}.

\begin{theorem} \label{th:12}
Let the working assumptions of Corollary~\ref{corol:StepSizeToZeroAlmostSurely} hold, and
assume also that the sequence of iterates $\{X_k\}$ generated by
Algorithm~\ref{alg:framework} is almost surely bounded. Then, there exists an almost sure event $A$ such that, for all $\omega \in A$, there exists a refining subsequence $\{X_k(\omega)\}_{k\in\mathcal{K}(\omega)}$ associated with the local phase, with refined point $X_*(\omega)\in\Omega$, such that $\lim_{k \in \mathcal{K}(\omega)} \Psi_k(\omega) \le 0$. Moreover, any refining direction $Q(\omega)$ for $X_*(\omega)$ satisfies $r_{\min} \le \|Q(\omega)\| \le r_{\max}$ and, if $Q(\omega)\in T^{H}_{\Omega}\big(X_*(\omega)\big)$,
\[
f^{\circ}\left(X_*(\omega); Q(\omega)\right) \ge 0 .
\]
\end{theorem}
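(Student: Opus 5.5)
The plan is to intersect the almost sure events already produced and then argue pathwise. Let $A$ be the intersection of $A_\Delta$ from Corollary~\ref{corol:StepSizeToZeroAlmostSurely}, the event $A_\Psi$ from the proof of Lemma~\ref{lm:liminf}, and the event on which $\{X_k\}$ is bounded. This is a finite intersection of almost sure events, so $A$ is almost sure. Fix $\omega\in A$ and drop $\omega$ from the notation.

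\textbf{Construction of the refining subsequence.} By Lemma~\ref{lm:liminf}, $\mathcal{U}$ is infinite and $\liminf_{k\in\mathcal{U}}\psi_k\le 0$. Choose an infinite $\mathcal{K}_0\subseteq\mathcal{U}$ with $\lim_{k\in\mathcal{K}_0}\psi_k=\liminf_{k\in\mathcal{U}}\psi_k\le0$. (If the liminf is $-\infty$, the limit is $-\infty$, which still satisfies the claimed inequality.) Since $\delta_k\to0$ along the whole sequence and $\{x_k\}$ is bounded, extract a further infinite $\mathcal{K}\subseteq\mathcal{K}_0$ along which $x_k\to x_*$; closedness of $\Omega$ gives $x_*\in\Omega$. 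Thus $\{x_k\}_{k\in\mathcal{K}}$ is a refining subsequence with $\lim_{k\in\mathcal{K}}\psi_k\le0$. The bound $r_{\min}\le\|q\|\le r_{\max}$ for any refining direction was already established after Definition~\ref{defi:refining}: each $q_k$ lies in the closed annulus, and limits stay there.

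\textbf{Sign of the Clarke--Jahn derivative.} Let $q$ be a refining direction with $q\in T^H_\Omega(x_*)$, realized along $\mathcal{K}'\subseteq\mathcal{K}$ with $q_k\to q$. Write
\[
-\psi_k=\frac{f(x_k+\delta_kq_k)-f(x_k)}{\delta_k}=\frac{f(x_k+\delta_kq)-f(x_k)}{\delta_k}+\frac{f(x_k+\delta_kq_k)-f(x_k+\delta_kq)}{\delta_k}.
\]
By the hypertangent definition, for $k\in\mathcal{K}'$ large we have $x_k\in\Omega\cap B(x_*;\varepsilon)$, $q_k\in B(q;\varepsilon)$ and $\delta_k<\varepsilon$. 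Hence $x_k+\delta_kq\in\Omega$ and $x_k+\delta_kq_k\in\Omega$, and all these points lie in a small neighbourhood of $x_*$. Local Lipschitz continuity of $f$ near $x_*$ with constant $L$ bounds the second term by $L\|q_k-q\|\to0$. The first term is admissible in \eqref{eq:clarkejahn}, with $y=x_k\to x_*$, $t=\delta_k\downarrow0$ and $y+tq\in\Omega$. Therefore
\[
f^\circ(x_*;q)\ge\limsup_{k\in\mathcal{K}'}\frac{f(x_k+\delta_kq)-f(x_k)}{\delta_k}=\limsup_{k\in\mathcal{K}'}(-\psi_k)=-\lim_{k\in\mathcal{K}}\psi_k\ge0.
\]
If $\lim\psi_k=-\infty$, the same bound shows this case cannot occur, because the Lipschitz bound forces $|\psi_k|\le L\,r_{\max}$ for large $k$. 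So in fact the limit is finite.

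\textbf{Main obstacle.} The delicate step is justifying that both $x_k+\delta_kq$ and $x_k+\delta_kq_k$ lie in $\Omega$ and inside the Lipschitz neighbourhood, so that the perturbation from $q_k$ to $q$ is controlled. This is exactly what the hypertangent assumption provides, and I would handle it first, before taking any limits.

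A secondary care point is measurability: $\mathcal{K}(\omega)$ and $X_*(\omega)$ are chosen pathwise. The statement only needs them to exist for each $\omega\in A$, so this causes no difficulty.
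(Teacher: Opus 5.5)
Your proposal is correct and follows the paper's proof essentially step by step. It uses the same event $A=A_\Delta\cap A_\Psi\cap\{\{X_k\}\text{ bounded}\}$, the same extraction $\mathcal{K}\subseteq\mathcal{K}_0\subseteq\mathcal{U}$, and the same decomposition of $-\psi_k$ through $x_k+\delta_kq$, controlled by the hypertangent property and local Lipschitz continuity. The only cosmetic difference is that the paper proves $\lim_{k\in\mathcal{K}}\psi_k$ is finite before introducing $q$, using $x_k+\delta_kq_k=x^{\lcl}_k\in\overline{\Omega}_k\subseteq\Omega$ by construction, so hypertangency is not needed for that membership, whereas you obtain it afterwards.
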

\begin{proof}
From Corollary~\ref{corol:StepSizeToZeroAlmostSurely}, Lemma~\ref{lm:liminf} and the fact that
the sequence $\{X_k\}$ is almost surely bounded, the event
$A = A_\Delta \cap A_\Psi \cap \{\{X_k\}~\text{is bounded}\}$ is almost sure as the intersection
of three almost sure events. Now, consider $\omega \in A$ and let $x_k= X_k(\omega)$,
$\delta_k= \Delta_k(\omega)$, $q_k= Q_k(\omega)$ and $\psi_k = \Psi_k(\omega)$. By
Lemma~\ref{lm:liminf}, there exists an infinite subset $\mathcal{K}_0\subseteq\mathcal{U}$ such that
$\lim_{k \in \mathcal{K}_0} \psi_k = \liminf_{k\in\mathcal{U}}\psi_k \in[-\infty,0]$. Since
$\{x_k\}_{k\in\mathcal{K}_0}$ is bounded, there exists an infinite subset
$\mathcal{K}=\mathcal{K}(\omega)\subseteq\mathcal{K}_0$ such that $\{x_k\}_{k\in\mathcal{K}}$
converges to some $x_*$, which belongs to $\Omega$ since $\Omega$ is closed. As
$\omega\in A_\Delta$, one also has $\lim_{k\in\mathcal{K}}\delta_k=0$, so that
$\{x_k\}_{k\in\mathcal{K}}$ is a refining subsequence with refined point $x_*$, and
$\lim_{k\in\mathcal{K}}\psi_k=\lim_{k\in\mathcal{K}_0}\psi_k\le0$.

Since $f$ is locally Lipschitz continuous, there exist $r>0$ and $\lambda>0$ such that $f$ is
Lipschitz continuous with constant $\lambda$ on $\Omega\cap B(x_*;r)$. As $x_k\to x_*$,
$\delta_k\to0$ and $\|q_k\|\le r_{\max}$ for $k\in\mathcal{U}$, the points $x_k$ and
$x_k+\delta_kq_k=x^{\lcl}_k\in\overline{\Omega}_k\subseteq\Omega$ belong to $\Omega\cap B(x_*;r)$ for
$k\in\mathcal{K}$ sufficiently large. Hence $|\psi_k|\le\lambda\|q_k\|\le\lambda r_{\max}$ for such
$k$, and $\lim_{k\in\mathcal{K}}\psi_k$ is finite.

Since $\{q_k\}_{k\in\mathcal{K}}$ lies in the compact annulus
$\{q : r_{\min}\le\|q\|\le r_{\max}\}$, refining directions for $x_*$ associated with
$\mathcal{K}$ exist and all satisfy $r_{\min}\le\|q\|\le r_{\max}$. Let $q$ be any such
refining direction, and let $\mathcal{K}'\subseteq\mathcal{K}$ be an infinite subset such
that $\lim_{k\in\mathcal{K}'}q_k=q$.

Assume now that $q\in T^{H}_{\Omega}(x_*)$ and let $\varepsilon>0$ be given by
Definition~\ref{defi:hypertangent} for $q$ at $x_*$. Since $x_k \to x_*$ and
$\delta_k \to 0$, one has, for $k \in \mathcal{K}'$ sufficiently large,
$x_k\in\Omega\cap B(x_*;\varepsilon)$ and $0<\delta_k<\varepsilon$; applying
Definition~\ref{defi:hypertangent} with $y=x_k$, $u=q$ and $t=\delta_k$ gives
$x_k+\delta_kq\in\Omega$. Moreover, since $\|q\|\le r_{\max}$, one also has
$x_k+\delta_kq\in B(x_*;r)$ for $k\in\mathcal{K}'$ sufficiently large. Therefore, for
$k \in \mathcal{K}'$ sufficiently large, the three points $x_k$, $x_k + \delta_k q_k$ and
$x_k + \delta_k q$ belong to $\Omega\cap B(x_*;r)$ and, by Lipschitz continuity of $f$ on
$\Omega\cap B(x_*;r)$,
\begin{align*}
\frac{f(x_k + \delta_k q) - f(x_k)}{\delta_k}
&= \frac{f(x_k + \delta_k q) - f(x_k+\delta_kq_k)}{\delta_k} + \frac{f(x_k + \delta_k q_k) - f(x_k)}{\delta_k}
\ge -\lambda \|q - q_k\| -\psi_k .
\end{align*}
Since $y = x_k\in\Omega$, $y\to x_*$, $t = \delta_k \downarrow 0$ and
$y+tq=x_k+\delta_kq\in\Omega$ for $k \in \mathcal{K}'$ sufficiently large, these points are
admissible in~\eqref{eq:clarkejahn} and one gets
\begin{align*}
f^{\circ}(x_*; q)
&\ge \limsup_{k \in \mathcal{K}'} \frac{f(x_k + \delta_k q) - f(x_k)}{\delta_k}
\ge \limsup_{k \in \mathcal{K}'} \left( -\psi_k - \lambda \|q - q_k\| \right)
= - \lim_{k \in \mathcal{K}} \psi_k \ \ge\ 0,
\end{align*}
where the equality uses $\lim_{k\in\mathcal{K}'}\|q - q_k\| = 0$ and the fact that
$\{\psi_k\}_{k\in\mathcal{K}'}$ is a subsequence of the convergent sequence
$\{\psi_k\}_{k\in\mathcal{K}}$.
\end{proof}
If, in addition, the set of normalized refining directions is dense in
$T^{H}_{\Omega}(X_*(\omega))\cap\{u : \|u\|=1\}$, then $f^{\circ}(X_*(\omega);q)\ge0$ for all
$q\in T^{H}_{\Omega}(X_*(\omega))$, meaning $X_*(\omega)$ is a Clarke stationary point of $f$ over $\Omega$~\cite{AudetDennis2006}.
\section{STREGO: A scalable implementation of Algorithm~\ref{alg:framework}}
\label{sec:STREGO}
In this section, we introduce STREGO, a scalable instance of the generalized framework defined in Section~\ref{sec:Framework} tailored to noisy, high-dimensional blackbox problems. To this end, we design a batch-based global phase coupled with an exploitative local phase. We first describe the overall procedure, then detail the global and local phases. STREGO begins by sampling an initial set of points $\{x_i\}_{i=1}^{N_0}$ and evaluating their objective values. We denote the initial sample set by $\mathcal{X}=\{x_i\}_{i=1}^{N_0}$ and the corresponding evaluations by $\mathcal{Y}=\{f_i \approx f(x_i)\}_{i=1}^{N_0}$.  
Together, the pairs $\{(x_i,f_i)\}_{i=1}^{N_0}$ define the initial DoE $\mathcal{D}$. We also define a starting point $x_0$.
The algorithm then alternates between a global and a local phase. The global phase consists of training a GP on the entire DoE and solving a bi-objective problem based on the posterior information. Each global iteration produces a batch of $\mathrm{G}$ points, denoted $\mathcal{X}^{\mathrm{G}}$, which are evaluated. The global solution $x^{{\glb}}_{k}$ is the best point among the $\mathrm{G}$ candidates (where $k$ is the number of completed global-local steps).
If the global phase is unsuccessful (according to \eqref{eq:sufficient-decrease-global-framework}), the algorithm proceeds to the local search  within a trust region around the incumbent \eqref{eq:trust-region-TREGO}. In this phase, the local candidate $x_{k}^{\lcl}$ minimizes the posterior mean of a local surrogate model.
We now detail the global phase and the local phase.
\subsection{The global phase of STREGO}
\label{sec:global-phase}
The global phase starts by building and training a GP on the entire DoE. Two quantities are obtained from the model: the posterior mean $\mu$ and the posterior variance $\sigma^2$. The mean serves as an estimate of the objective function, while the variance represents the model's uncertainty about this estimate. Recall that EGO suggests the next candidate by the scalar acquisition function EI. However, as shown in~\cite{de2021greed}, EI tends to suggest points that have already been evaluated or that lie close to previously evaluated ones. Another difficulty appears in high dimension, where EI tends to propose points on the boundary of the search space. This is known as the boundary issue~\cite{siivola2018correcting,oh2018bock}. Indeed, $\sigma^2$ is a purely local measure. It quantifies the model's uncertainty at $x$ itself, independent of how that uncertainty relates to the rest of $\Omega$. This locality is the root of the boundary issue. Near the boundary of $\Omega$, the kernel's correlation neighborhood is truncated by the domain edge. Since no observations can lie on the other side, boundary points receive less information than interior points. This inflates their variance relative to points near the center and the effect worsens with dimension. Because variance-based acquisition criteria reward high $\sigma^2$ directly, they are repeatedly drawn toward these boundary regions, regardless of whether these regions are actually the most informative. Moreover, a scalar acquisition function captures only one exploration--exploitation trade-off. 

To address this, some works directly sample from the Pareto front of a bi-objective \cite{de2021greed,jiang2025trading}, or more generally multi-objective \cite{lyu2018batch}, problem built on the posterior of the surrogate. More specifically, Carciaghi \textit{et~al.}~\cite{carciaghi2025bi} formulate this as a mean--variance bi-objective problem, whose infill criterion is given by:
\begin{equation}
\min_{x \in \Omega} \, \big(\mu(x), -\sigma^2(x)\big).
\label{prob:biobj-prob-variance}
\end{equation}

Although separating the mean and variance gives access to diverse trade-offs, ranking candidates by $\sigma^2$ does not mitigate the boundary issue. In parallel, several works have developed stepwise uncertainty reduction (SUR) strategies for GP surrogates \cite{bect2012sequential,picheny2014stepwise}. At each step, the next candidate is chosen to minimize the expected remaining uncertainty about a quantity of interest once it is observed. Within this framework, taking the posterior variance as the uncertainty measure recovers the integrated mean squared error (IMSE) criterion of Sacks \textit{et~al.}~\cite{sacks1989design}, originally a one-shot design criterion for globally accurate surrogates. More recently, it was applied sequentially in active learning, where it is known as the integrated mean squared prediction error (IMSPE) \cite{binois2024gearing}. The same quantity appears in the literature as integrated variance reduction ($\mathrm{IVR}$) and as the negative integrated posterior variance up to an additive constant. Rather than ranking a candidate solely by its current predictive variance, $\mathrm{IVR}$ evaluates it according to the expected reduction in posterior variance over a reference domain. Therefore, it favors observations whose influence extends across a large portion of $\Omega$. 

Formally, given the current GP posterior with predictive
variance $\sigma^2$, the integrated variance reduction of a candidate $x$ is defined as: 
\begin{equation}
\mathrm{IVR}(x) = \int_\Omega \left[ \sigma_{\mathcal{D}}^2(x') - \sigma_{\mathcal{D}\cup\{x\}}^2(x') \right] \pi(x') \, dx',
\label{eq:ivr-def}
\end{equation}
where $\sigma^2_{\mathcal{D}}$ and $\sigma^2_{\mathcal{D}\cup\{x\}}$ denote the posterior
variance of the surrogate before and after adding $x$ to the current set of
observations $\mathcal{D}$, and $\pi$ is a reference measure over $\Omega$. Thus, IVR measures how much querying $x$ would reduce the uncertainty of the model over the entire domain $\Omega$, rather than the uncertainty at $x$ alone. This distinction is central in the expensive setting considered here. Under a fixed evaluation budget, each evaluation should improve the model where it matters for the search, not only at the candidate itself. By favoring candidates whose information propagates across the domain, IVR extracts more information from each evaluation and it does not reward boundary points merely because they are isolated. Note that \eqref{eq:ivr-def} depends on the data only through the design locations $\mathcal{X}$, not through the observed responses $\mathcal{Y}$. In practice we approximate $\pi$ by a fixed reference set and evaluate the resulting criterion in closed form. Further details are given in Section~\ref{sec:implementation_dets}.

In this context, our proposed bi-objective problem for the global phase is given by
\begin{equation}
    \min_{x \in \Omega} \, \big(\mu(x), -\mathrm{IVR}(x)\big).
    \label{prob:global-biobj-strego}
\end{equation}
Solving this bi-objective problem yields an approximation of the Pareto set $\mathcal{X}^{\mathrm{P}} \subseteq \Omega$. This set typically contains many more points
than the batch size $\mathrm{G}$, so only a subset of them can be evaluated. Choosing this
subset poorly may result in candidates that carry essentially the same information,
thereby wasting costly evaluations. Both
the predictive mean and the $\mathrm{IVR}$ criterion are derived from similarity
kernels defined in terms of input-space distances. Consequently, low-mean candidates
lie near previously well-performing observations, while high-$\mathrm{IVR}$
candidates lie in undersampled regions, and these regions are typically well
separated in the input space. Following this reasoning, and as done in~\cite{carciaghi2025bi},
we generate the batch of $\mathrm{G}$ global candidates by clustering the approximate Pareto
set $\mathcal{X}^{\mathrm{P}}$ in the input space.
Given a target batch size $\mathrm{G}$, we divide the Pareto set points into $\mathrm{G}$ clusters. 
A representative candidate is then selected from each cluster to ensure diversity. The resulting batch is set as
$
\mathcal{X}^{\mathrm{G}} = \{x_{i}^{\mathrm{G}}\}_{i=1}^{\mathrm{G}}$.
The DoE is augmented with the $\mathrm{G}$ evaluated solutions and the global solution $x^{\glb}_k$  is computed as the best point among them in terms of function value. 

Selecting $\mathrm{G}$ candidates at once allows them to be evaluated in parallel and gives a better coverage of the search space, which is difficult to achieve in high dimension. The main steps are reported in Algorithm~\ref{alg:strego_global_phase}. Given the corresponding input, it generates a batch of $\mathrm{G}$ points, evaluates them and returns both the augmented DoE and
the best point of the batch. In STREGO, that point is the global candidate $x^{\glb}_k$ used in Step 1 of Algorithm~\ref{alg:framework}.
\begin{algorithm}[!h]
    \footnotesize
    \caption{\bf Step 1 of Algorithm~\ref{alg:framework}.}
   \label{alg:strego_global_phase}
    \SetAlgoLined
    \LinesNotNumbered

    \KwData{Oracle  function $f$, feasible domain $\Omega$, batch size $\mathrm{G}\ge1$, DoE $\mathcal{D}=(\mathcal{X},\mathcal{Y})$.
    }
    \KwResult{$x^{\glb}$, $f^{\glb}$ and $\mathcal{D}$.
    }
\begin{algorithmic}[1]
\STATE Fit a GP surrogate on $\mathcal{D}$ with posterior mean $\mu$ and $\mathrm{\mathrm{IVR}}$.

\STATE Find a Pareto set $\mathcal{X}^{\mathrm{P}}$ for the bi-objective problem: $
\displaystyle \min_{x\in\Omega}\bigl(\mu(x),-\mathrm{IVR}(x)\bigr)$.

\STATE Select a batch $\mathcal{X}^{\mathrm{G}}=\{x_i\}_{i=1}^{\mathrm{G}}\subset \mathcal{X}^{\mathrm{P}}$ (e.g., via a clustering method).

\STATE Estimate the selected batch
$\mathcal{Y}^{\mathrm{G}}=\{f_i \approx f(x_i)\}_{i=1}^{\mathrm{G}}$.
\STATE Let $x^{\glb}$ be the best estimated point in the batch and $f^{\glb} \approx f(x^{\glb})$.
\STATE Update the DoE $
\mathcal{D}$, i.e.,
$
\mathcal{X} \leftarrow \mathcal{X}\cup \mathcal{X}^{\mathrm{G}} \text{ and }
\mathcal{Y} \leftarrow \mathcal{Y}\cup \mathcal{Y}^{\mathrm{G}}
$.
\end{algorithmic}
\end{algorithm}
\subsection{The local phase of STREGO}
\label{sec:local_phase}
The global phase explores the domain to generate promising starting points for local refinement. Because it must balance exploration and exploitation, the bi-objective formulation is well suited to this role. The local phase, by contrast, is purely exploitative. Once a promising region has been identified, the task is no longer to explore but to refine the estimate of the optimum within that region. Consequently, we discard the variance term and rely solely on the predictive mean inside the trust region:
\begin{equation}
\label{prob:min_local}
 \min_{x \in \overline{\Omega}} \, \mu(x),
\end{equation}
where $\overline{\Omega}$ is the local trust region around a given incumbent point $x_{\mathrm{inc}}$
\begin{equation}
    \overline{\Omega} = \left\{ x \in \Omega \;\middle|\; r_{\min}\,\delta \leq \|x - x_\mathrm{inc}\| \leq r_{\max}\,\delta \right\},
    \label{eq:trust-region-TREGO:inc}
\end{equation}
where $\delta$ is the trust-region radius and $0 < r_{\min} < r_{\max}$ are user-defined constants.

In TREGO's original implementation, the GP surrogate is trained on the entire DoE, which conflicts with the local nature of the trust-region logic, especially in high-dimensional settings. This choice is problematic for two reasons. First, fitting a single stationary kernel to the entire landscape forces one lengthscale to capture the behavior near the incumbent and the global structure of the objective. As the trust region shrinks, this mismatch can cause the surrogate to degenerate~\citep{eriksson2019scalable, tang2026rethinking}. Second, GP training  scales cubically with the number of points. Since the DoE grows by $\mathrm{G}$ points every global iteration, retraining on the full DoE becomes increasingly expensive. Therefore, we adopt an independent local model for the local phase and restrict its training set to the points lying within the trust region. This keeps the fitting cost bounded and allows the lengthscale to adapt to the local geometry of $\overline{\Omega}$ rather than to the whole domain. 
Problem~\eqref{prob:min_local} becomes
\begin{equation}
\label{prob:local-min-strego}
    \min_{x \in \overline{\Omega}} \, \overline{\mu}(x),
\end{equation}
where $\overline{\mu}$ is a local surrogate trained only using the local DoE. 
However, the trust region may contain too few evaluated points to train an accurate local model. We therefore train it on the relaxed local DoE
\begin{equation}
    \label{relaxed_TR}
     \mathcal{\overline{X}}
        =
      \left\{ x \in \mathcal{X} \;\middle|\;  \|x - x_{\text{inc}}\| \leq r_{\max}\,\rho \delta \right\},
\end{equation}
where $\rho\ge 1$ is a relaxation factor chosen so that the size of the local DoE $\mathcal{\overline{X}}$ is above a certain threshold $n_{\min}$. Relaxing the trust region enlarges the pool of already evaluated points available to the local model, providing more training data and improving its accuracy in the neighborhood of the incumbent. The local candidate $x^{\lcl}_k$ is then obtained by solving (\ref{prob:local-min-strego}), where the local GP is trained on the whole local DoE $\mathcal{\overline{X}}$. Given the current DoE, incumbent $x_k$ and $\delta_k$, Algorithm~\ref{alg:strego_local_phase} generates the local candidate $x^{\lcl}_k$ and updates the DoE with the new evaluation.
\begin{algorithm}[htbp] 
    \footnotesize
    \caption{\bf Step 3 of Algorithm~\ref{alg:framework}.}
   \label{alg:strego_local_phase}
    \SetAlgoLined
    \LinesNotNumbered

    \KwData{Oracle function $f$, incumbent $x_{\text{inc}}$, trust-region radius $\delta>0$, constants $r_{\max}> r_{\min}>0$, a size threshold $n_{\min}>1$ and the DoE $\mathcal{D}=(\mathcal{X},\mathcal{Y})$.
    }
    \KwResult{$x^{\lcl}$, $f^{\mathrm{\lcl}}$ and $\mathcal{D}$.
    }
\begin{algorithmic}[1]
\STATE Define the relaxed local DoE
\[
  \overline{\mathcal{X}} = \bigl\{x \in \mathcal{X} \;\big|\; \|x - x_k\| \le \rho\, r_{\max} \delta_k\bigr\},
\]
where $\rho \ge 1$ is the smallest value such that $|\overline{\mathcal{X}}| \ge n_{\min}$. Set $\overline{\mathcal{Y}} = \{f_i : x_i \in \overline{\mathcal{X}}\}$.
\STATE Fit a local GP on $\overline{\mathcal{D}} = (\overline{\mathcal{X}}, \overline{\mathcal{Y}})$ and let $\overline{\mu}$ denote its posterior mean.
\STATE Minimize the posterior mean $\overline{\mu}$ over the local trust region, as defined in ~\eqref{eq:trust-region-TREGO:inc}, i.e., 
$$
x^{\lcl} \in  \operatorname*{argmin}_{x \in \overline{\Omega}} \overline{\mu}(x)
\quad \text{and} \quad
f^{\mathrm{\lcl}} \approx f(x^{\lcl}).
$$
\STATE Update the DoE $\mathcal{D}$, i.e.,
$\mathcal{X} \gets \mathcal{X} \cup \{x^{\mathrm{\lcl}}\}$ and $
\mathcal{Y} \gets \mathcal{Y} \cup \{f^{\mathrm{\lcl}}\}$.
\end{algorithmic}
\end{algorithm}

\section{Numerical experiments}
\label{sec:numerical}
The experiments in this section serve two purposes. The first is to assess the choice of $\mathrm{IVR}$, rather than the predictive variance, as the uncertainty measure of the global phase. The second is to compare STREGO with TURBO, BAXUS, NSMA-BO and TREGO.
\subsection{Experimental setting}
We first use five synthetic benchmark functions, described in Appendix~\ref{app:test-prob} together with the noise models and the shift procedure. Two are separable (Alpine-01, Schwefel), two are nonseparable (Ackley, Rotated Rastrigin) and one, Hartmann-6, has a low-dimensional structure that a solver can exploit. Each function is tested in dimensions $d \in \{4, 20, 50, 100\}$, without noise and with two types of Gaussian noise. The additive noise has a standard deviation of approximately 10\% of the function's output range in the dimension considered, and the multiplicative noise a relative standard deviation of 0.1. Hartmann-6 requires $d \geq 6$ and is therefore not tested at $d = 4$. To avoid favoring certain solvers, we shift the functions whose optimum lies at the origin or near the boundary of the feasible domain. We then complete the comparison on real-world engineering problems from the blackbox benchmark Bencher~\cite{papenmeier2025bencher}.

\subsection{Implementation details}
\label{sec:implementation_dets}
STREGO was implemented in Python 3 and the experiments were run on a Linux machine with an Intel 12th Gen Core i7-12700 (x86\_64) CPU, using 6 CPU cores.

The GP of each phase is a \texttt{SingleTaskGP} model from BoTorch~\cite{balandat2020botorch}. It uses an unscaled radial basis function (RBF) kernel with automatic relevance determination (one lengthscale $\ell_i$ per coordinate) and the dimensionality-scaled lengthscale prior of Hvarfner \textit{et~al.}~\cite{hvarfner2024vanilla}.

In the global phase, we approximate IVR~\eqref{eq:ivr-def} by the average variance reduction over 64 reference points spread evenly across the domain by a Sobol sequence. These points are generated once and reused throughout, so that all candidates are compared on the same basis. The global bi-objective problem is solved with NSMA~\cite{lapucci2023memetic}, with a population of $100$ individuals and at most $20$ generations. Its operators are simulated binary crossover (SBX), with probability $0.9$ and distribution index $\eta_c = 20$, and polynomial mutation, with distribution index $\eta_m = 20$. A crowding-based mechanism, with a crowding quantile of $0.9$, promotes diversity along the Pareto front, and a gradient-based refinement step is performed every $n_{\mathrm{opt}} = 5$ iterations. The other settings of NSMA are its defaults.

We select the batch from the approximate Pareto set returned by NSMA, using $k$-means clustering in the input space with $k=\mathrm{G}$ and $n_{\text{init}}=10$ random initializations. Because the centroid of a cluster is the mean of its Pareto-set points and need not be one of them, we retain, for each cluster, the Pareto-set point nearest to its centroid. We set $\mathrm{G}=3$ because three clusters tend to yield one candidate with a low posterior mean, one with a high $\mathrm{IVR}$ and one compromise between the two. Appendix~\ref{app:batch_size_analysis} shows that $\mathrm{G}=3$ is preferable in high dimension, although $\mathrm{G}=1$ performs better at $d=4$. 

In the local phase, the local GP is trained on the relaxed local DoE~\eqref{relaxed_TR}, with $\rho \geq 1$ chosen as the smallest value such that $|\overline{\mathcal{X}}| \geq n_{\min} = 10$. Its posterior mean is minimized over the annulus~\eqref{eq:trust-region-TREGO:inc}, with $r_{\min}$ set to the machine precision, since Algorithm~\ref{alg:framework} requires $r_{\min} > 0$. The minimization uses the \texttt{optimize\_acqf} routine of BoTorch, with the posterior mean as acquisition function, $(2d+4)$ restarts and $(2d+4)^2$ raw samples drawn by Sobol sampling. For large $d$, the number of raw samples can be reduced to limit memory use.

The evaluation budget is limited to 200, which reflects the expensive setting considered in this paper. Although the candidates in a batch could be evaluated in parallel, we evaluate them sequentially. Each benchmark is run over $20$ independent trials, with all solvers started from the same initial DoE of size $N_0=10$. STREGO is run with the hyperparameters summarized in Table~\ref{tab:strego_hparams} of Appendix~\ref{app:hyperparameters} and each baseline with the default hyperparameters recommended in its original paper.

\subsection{Metrics}
\label{sec:metrics}
We use two metrics. The first, $f_{\mathrm{best}}$, is the best objective value found so far. Convergence curves show its mean over the $20$ trials as a function of the number of evaluations, with shaded regions indicating the 95\% confidence interval around the mean. The second, $f_{\mathrm{final}}$, is the best objective value a solver reaches within the budget on a given benchmark. To compare problems of different scales, we normalize $f_{\mathrm{final}}$ for each function and dimension,
\[
\text{Normalized } f_{\mathrm{final}} =
\frac{f_{\mathrm{final}} - f_{\min}}{f_{\max} - f_{\min}},
\]
where $f_{\min}$ and $f_{\max}$ are the best and worst values observed across all solvers and all trials on that function at that dimension. A normalized value of $0$ thus corresponds to the best value of the group, and a value of $1$ to the worst. In the noisy settings, all reported values are the noisy values observed by the optimizer rather than the underlying true objective, so that the reported performance reflects what an algorithm could actually measure in practice. We also report the CPU time needed to first reach $f_{\mathrm{final}}$, excluding the cost of the initial design.

The normalized values of $f_{\mathrm{final}}$ are displayed as violin plots, one per solver. A violin gathers one value per benchmark function and trial, at a fixed dimension and noise setting. Its width is a kernel density estimate of these values, so that the violin is wide where many runs concentrate and narrow elsewhere. The central bar marks the median, and the upper and lower bands mark the worst and best runs. A violin concentrated near $0$ therefore indicates a solver whose final values are close to the best of the group across functions and trials, whereas a bimodal shape indicates a solver whose behavior differs from one function to another.

\subsection{Variance versus IVR}
We compare two variants of STREGO that differ only in the uncertainty objective of the global bi-objective problem: the predictive variance, as in problem~\eqref{prob:biobj-prob-variance}, and $\mathrm{IVR}$, as in problem~\eqref{prob:global-biobj-strego}. We refer to them as ($\mu$, $-\sigma^2$) and ($\mu$, $-\mathrm{IVR}$), respectively. Both use the local phase described in Section~\ref{sec:local_phase}. Figure~\ref{fig:bi_obj_ablation} shows the evolution of $f_{\mathrm{best}}$ for both variants on the four functions tested at every dimension.

On Ackley, Alpine-01 and Rotated Rastrigin, $\mathrm{IVR}$ generally yields better final values than the predictive variance, for $d$ ranging from $4$ to $100$. For instance, on the noise-free Ackley function at $d=50$, the final value is $\approx 17.3$ with $\mathrm{IVR}$, as opposed to $\approx 19.6$ with the variance. On Rotated Rastrigin under additive noise at $d=100$, the values are $\approx 1000$ and $\approx 1120$, respectively. The gap is smaller on Ackley under multiplicative noise at $d=20$ and $d=100$, where the two variants end close to each other. Schwefel is the exception: the two criteria perform comparably, and the variance-based variant is slightly better in some cases, in particular under multiplicative noise at $d=20$ ($\approx 5100$ as opposed to $\approx 5350$).
\begin{figure}[h!]
    \centering
    \includegraphics[width=\linewidth]{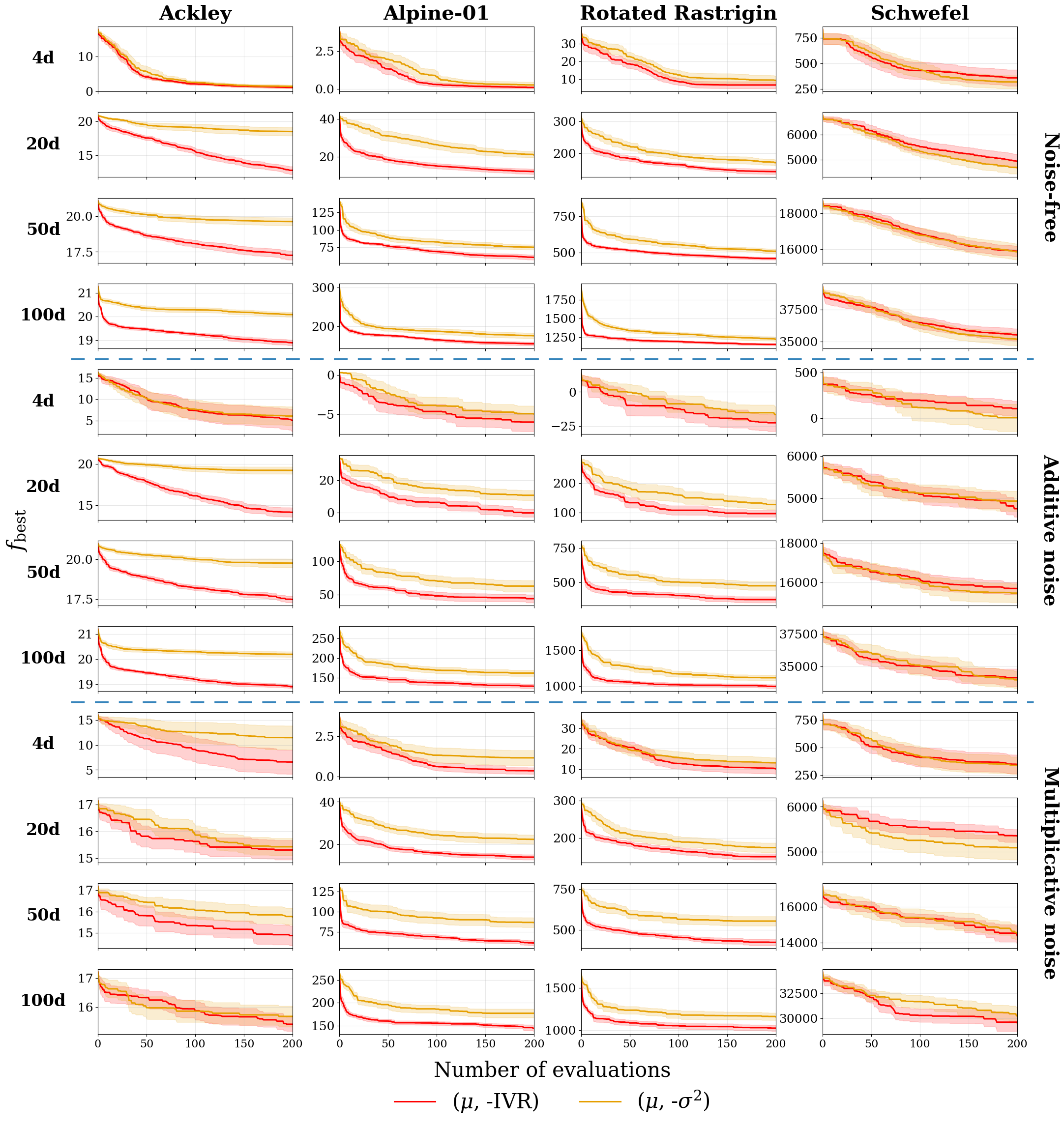}
    \caption{Comparison of STREGO with the global bi-objective criterion $(\mu, -\mathrm{IVR})$ against its variance counterpart $(\mu, -\sigma^2)$, both using the exploitative local phase, on Ackley, Alpine-01, Rotated Rastrigin and Schwefel in $4$d, $20$d, $50$d and $100$d under different noise settings.}
    \label{fig:bi_obj_ablation}
\end{figure}

\subsection{Comparison with state-of-the-art solvers}
We now compare STREGO with the four solvers named at the beginning of this section. TURBO~\cite{eriksson2019scalable} and BAXUS~\cite{papenmeier2022increasing} are well-established baselines for high-dimensional BO and handle the dimension in different ways. TURBO confines the search to one or several trust regions, runs an independent BO process in each and allocates evaluations among them through an implicit multi-armed bandit. BAXUS restricts the dimensionality of the search rather than its region: it optimizes over a nested sequence of random linear embeddings, expands the embedding whenever progress stalls and uses a TURBO-style trust region within each subspace. NSMA-BO~\cite{carciaghi2025bi} solves problem~\eqref{prob:biobj-prob-variance} of Section~\ref{sec:global-phase}, and the results reported in~\cite{carciaghi2025bi} show that it is also competitive in high dimension. Finally, TREGO~\cite{diouane2023trego}, described in Section~\ref{sec:trego}, is included because STREGO builds on it, so the comparison evaluates the effectiveness of the features that STREGO adds. We use the implementation of TREGO in egobox~\cite{Lafage2022}, within the Surrogate Modeling Toolbox (SMT)~\cite{saves2024smt}. This implementation uses the same GP in the global and local phases and EI as the acquisition function in both.

\subsubsection{Results on synthetic benchmarks}
We separate the effect of the dimension from that of the noise. Figure~\ref{fig:det_dim_analysis} varies the dimension in the noise-free setting, and Figures~\ref{fig:noise_analysis} and~\ref{fig:dim_noise_analysis} vary the noise setting at $d=4$ and $d=50$, respectively. Each figure shows the violins of the normalized $f_{\mathrm{final}}$, with tables of the mean normalized $f_{\mathrm{final}}$ and of the mean CPU time needed to first reach it.

\begin{figure*}[h!]
\centering


\includegraphics[width=\linewidth]{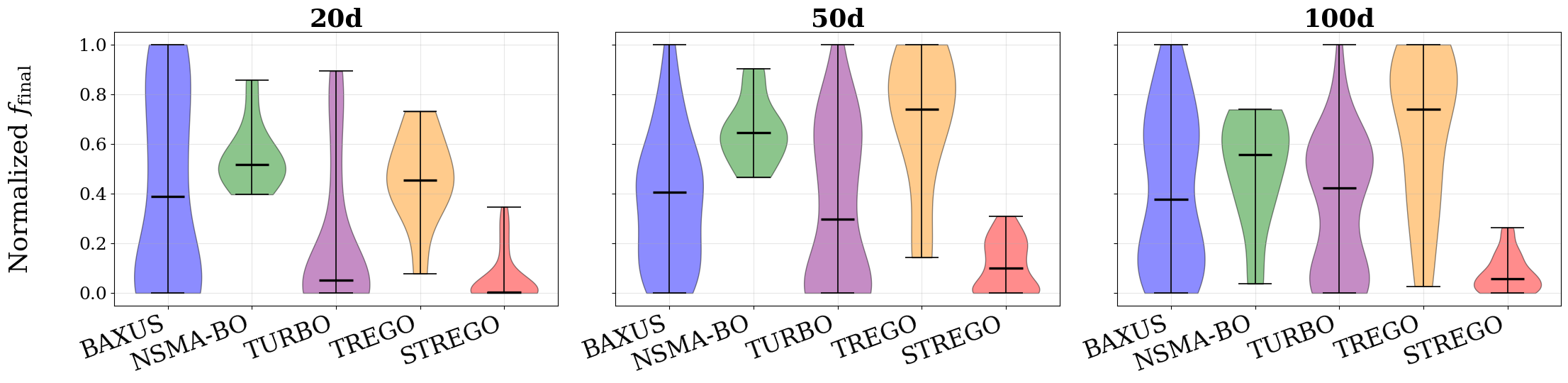}

\vspace{4mm}


\begin{minipage}[t]{0.49\textwidth}
\centering
\textbf{Normalized final objective value}\\[2mm]

\resizebox{\linewidth}{!}{
\begin{tabular}{lccc}
\toprule
Solver
& $20$d
& $50$d
& $100$d \\
\midrule

BAXUS
& $0.408 \pm 0.049$
& $0.376 \pm 0.038$
& $0.413 \pm 0.042$ \\

NSMA-BO
& $0.550 \pm 0.046$
& $0.652 \pm 0.041$
& $0.499 \pm 0.054$ \\

TURBO
& $0.200 \pm 0.041$
& $0.339 \pm 0.046$
& $0.354 \pm 0.036$ \\

TREGO
& $0.462 \pm 0.050$
& $0.694 \pm 0.040$
& $0.653 \pm 0.049$ \\

STREGO
& $\mathbf{0.061} \pm 0.018$
& $\mathbf{0.107} \pm 0.015$
& $\mathbf{0.080} \pm 0.010$ \\

\bottomrule
\end{tabular}
}
\end{minipage}
\hfill
%
\begin{minipage}[t]{0.49\textwidth}
\vspace{-9pt}
\centering
\textbf{CPU time (s)}\\[2mm]

\resizebox{\linewidth}{0.727\height}{%
\begin{tabular}{lccc}
\toprule
Solver
& $20$d
& $50$d
& $100$d \\
\midrule

BAXUS
& $253.23 \pm 13.44$
& $609.87 \pm 27.65$
& $689.12 \pm 55.23$ \\

NSMA-BO
& $475.95 \pm 3.23$
& $\mathbf{372.56} \pm 11.50$
& $\mathbf{503.46} \pm 14.18$ \\

TURBO
& $\mathbf{63.78} \pm 2.92$
& $515.66 \pm 24.66$
& $559.66 \pm 18.58$ \\

TREGO
& $176.83 \pm 8.50$
& $913.53 \pm 9.25$
& $6271.72 \pm 88.22$ \\

STREGO
& $334.48 \pm 25.43$
& $449.19 \pm 32.99$
& $518.80 \pm 29.79$ \\

\bottomrule
\end{tabular}
}
\end{minipage}

\caption{Normalized final objective value (lower is better) and CPU time for $d=20$, 50 and 100 in the noise-free setting. Tables report the mean $\pm$ standard error over 20 trials and 5 functions.}
\label{fig:det_dim_analysis}
\end{figure*}
STREGO obtains the best mean normalized $f_{\mathrm{final}}$ in every configuration of Figures~\ref{fig:det_dim_analysis} to~\ref{fig:dim_noise_analysis}, between 0.061 and 0.195, and the most compact violins. Its lead over the next best solver is largest under additive noise at $d=4$, where the other four solvers obtain means between 0.474 and 0.511, as opposed to 0.149 for STREGO.

The ranking of the other solvers depends on the dimension and on the noise. In the noise-free setting, TURBO ranks second at every dimension, with means between 0.136 and 0.354, and BAXUS ranks third for $d \geq 20$ (0.376 to 0.413). TREGO degrades as the dimension grows. It ranks third at $d=4$ (0.165), fourth at $d=20$ (0.462) and last at $d=50$ and $d=100$ (0.694 and 0.653), where its runs cover most of $[0, 1]$. This degradation is consistent with the two limitations of TREGO discussed in Sections~\ref{sec:global-phase} and~\ref{sec:local_phase}, namely the attraction of EI toward the boundary in high dimension and the single GP trained on the entire DoE. At $d=4$, TURBO remains second under multiplicative noise (0.212), whereas under additive noise the other four solvers lie within 0.04 of each other. At $d=50$, NSMA-BO becomes second under both types of noise (0.383 and 0.358), and TREGO remains last (0.584 and 0.454). TURBO falls to third under additive noise, level with BAXUS (0.458 and 0.463), and to fourth under multiplicative noise (0.436).

In terms of cost, STREGO is expensive for $d \leq 20$ but not for $d \geq 50$. At $d=4$, it is the second most expensive solver (280 to 347\,s), after NSMA-BO (389 to 452\,s), whereas TURBO, TREGO and BAXUS need 12 to 77\,s. A plausible cause is that both STREGO and NSMA-BO solve a bi-objective problem with NSMA at every global iteration, which costs more than the single-objective acquisition step of the other solvers. As the dimension grows, the cost of STREGO increases moderately, from 334\,s at $d=20$ to 519\,s at $d=100$, whereas that of TREGO rises from 177\,s to 6272\,s. For $d \geq 50$ in the noise-free setting, only NSMA-BO is cheaper than STREGO (373 and 503\,s, as opposed to 449 and 519\,s). Under noise at $d=50$, STREGO becomes the cheapest solver (356 and 385\,s), while the cost of TREGO increases to 1383 and 1271\,s.

\begin{figure*}[t]
\centering


\includegraphics[width=\linewidth]{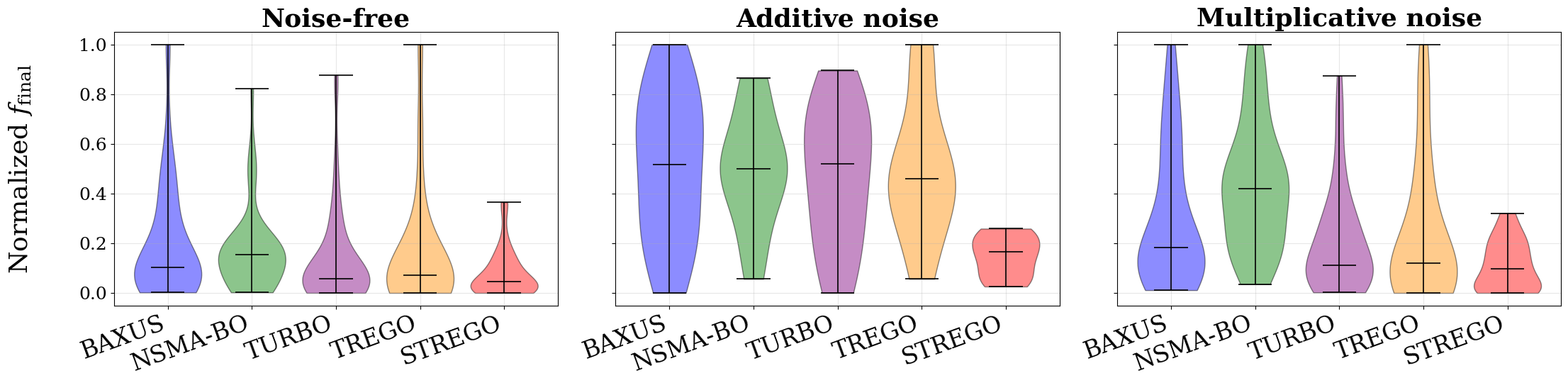}

\vspace{4mm}


\begin{minipage}[t]{0.49\textwidth}
\centering
\textbf{Normalized final objective value}\\[2mm]

\resizebox{\linewidth}{!}{
\begin{tabular}{lccc}
\toprule
Solver
& Noise-free
& Additive
& Multiplicative \\
\midrule

BAXUS
& $0.202 \pm 0.032$
& $0.511 \pm 0.054$
& $0.301 \pm 0.044$ \\

NSMA-BO
& $0.171 \pm 0.026$
& $0.494 \pm 0.048$
& $0.436 \pm 0.041$ \\

TURBO
& $0.136 \pm 0.026$
& $0.474 \pm 0.050$
& $0.212 \pm 0.034$ \\

TREGO
& $0.165 \pm 0.038$
& $0.499 \pm 0.045$
& $0.252 \pm 0.047$ \\

STREGO
& $\mathbf{0.080} \pm 0.013$
& $\mathbf{0.149} \pm 0.023$
& $\mathbf{0.111} \pm 0.016$ \\

\bottomrule
\end{tabular}
}
\end{minipage}
\hfill
\begin{minipage}[t]{0.49\textwidth}
\centering
\textbf{CPU time (s)}\\[2mm]

\resizebox{\linewidth}{!}{
\begin{tabular}{lccc}
\toprule
Solver
& Noise-free
& Additive
& Multiplicative \\
\midrule

BAXUS
& $76.98 \pm 2.06$
& $64.30 \pm 5.28$
& $60.76 \pm 2.83$ \\

NSMA-BO
& $452.45 \pm 14.37$
& $409.16 \pm 15.38$
& $389.02 \pm 11.89$ \\

TURBO
& $\mathbf{13.50} \pm 0.36$
& $\mathbf{11.94} \pm 0.62$
& $\mathbf{13.15} \pm 0.57$ \\

TREGO
& $35.00 \pm 1.42$
& $29.88 \pm 1.45$
& $27.89 \pm 1.09$ \\

STREGO
& $347.29 \pm 10.56$
& $298.41 \pm 27.44$
& $279.63 \pm 9.73$ \\

\bottomrule
\end{tabular}
}
\end{minipage}
\caption{Normalized final objective value (lower is better) and CPU time across different noise settings for $d=4$. Tables report the mean $\pm$ standard error over 20 trials and 4 functions.}
\label{fig:noise_analysis}
\end{figure*}
\begin{figure*}[h!]
\centering


\includegraphics[width=\linewidth]{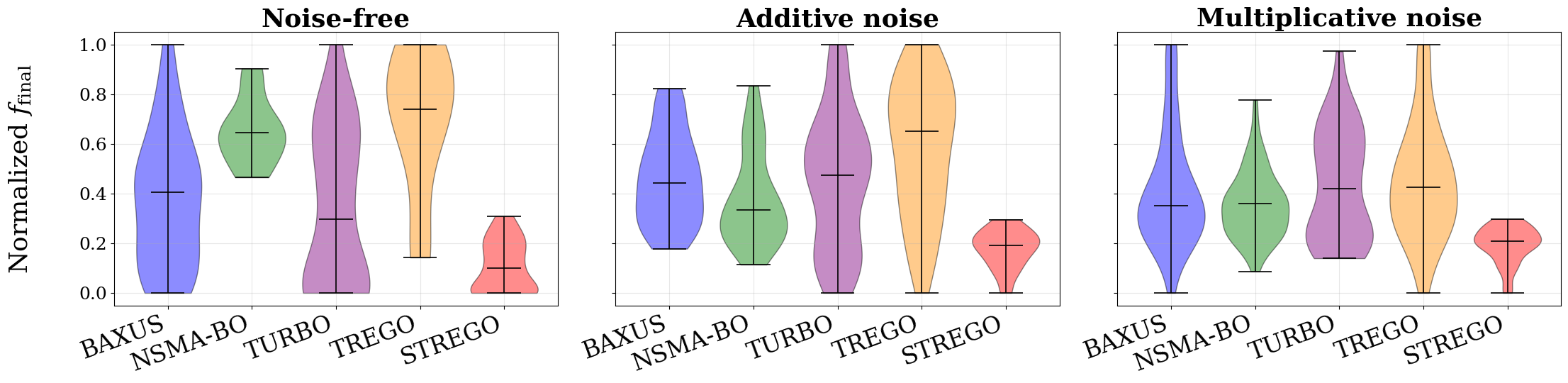}

\vspace{4mm}


\begin{minipage}[t]{0.49\textwidth}
\centering
\textbf{Normalized final objective value}\\[2mm]

\resizebox{\linewidth}{!}{
\begin{tabular}{lccc}
\toprule
Solver
& Noise-free
& Additive
& Multiplicative \\
\midrule

BAXUS
& $0.376 \pm 0.038$
& $0.463 \pm 0.025$
& $0.419 \pm 0.031$ \\

NSMA-BO
& $0.652 \pm 0.041$
& $0.383 \pm 0.026$
& $0.358 \pm 0.018$ \\

TURBO
& $0.339 \pm 0.046$
& $0.458 \pm 0.037$
& $0.436 \pm 0.030$ \\

TREGO
& $0.694 \pm 0.040$
& $0.584 \pm 0.037$
& $0.454 \pm 0.031$ \\

STREGO
& $\mathbf{0.107} \pm 0.015$
& $\mathbf{0.178} \pm 0.010$
& $\mathbf{0.195} \pm 0.010$ \\

\bottomrule
\end{tabular}
}
\end{minipage}
\hfill
\begin{minipage}[t]{0.49\textwidth}
\centering
\textbf{CPU time (s)}\\[2mm]

\resizebox{\linewidth}{!}{
\begin{tabular}{lccc}
\toprule
Solver
& Noise-free
& Additive
& Multiplicative \\
\midrule

BAXUS
& $609.87 \pm 27.65$
& $615.45 \pm 38.17$
& $591.74 \pm 32.77$ \\

NSMA-BO
& $\mathbf{372.56} \pm 11.50$
& $473.16 \pm 39.20$
& $494.36 \pm 31.38$ \\

TURBO
& $515.66 \pm 24.66$
& $550.57 \pm 24.11$
& $561.89 \pm 23.05$ \\

TREGO
& $913.53 \pm 9.25$
& $1382.85 \pm 85.35$
& $1270.90 \pm 74.19$ \\

STREGO
& $449.19 \pm 32.99$
& $\mathbf{356.40} \pm 21.30$
& $\mathbf{384.79} \pm 23.96$ \\

\bottomrule
\end{tabular}
}
\end{minipage}

\caption{Normalized final objective value (lower is better) and CPU time across different noise settings for $d=50$. Tables report the mean $\pm$ standard error over 20 trials and 5 functions.}
\label{fig:dim_noise_analysis}

\end{figure*}
\subsubsection{Results on real-world and engineering benchmarks}
The six real-world problems are Rover ($60$d), Mopta08 ($124$d) and four MuJoCo locomotion tasks: Swimmer ($16$d), Hopper ($33$d), Walker ($102$d) and HalfCheetah ($102$d). We add no artificial noise to them. Rover and Mopta08 are deterministic, whereas the MuJoCo tasks are inherently noisy, because each evaluation returns the cumulative reward of a policy whose initial state is randomly perturbed by the simulator. Since STREGO does not handle the blackbox constraints of Mopta08, we add them to the objective as penalty terms, as is common in high-dimensional BO~\cite{eriksson2019scalable,papenmeier2022increasing}. Figure~\ref{fig:realworldprob} shows the evolution of $f_{\mathrm{best}}$, the only metric reported for these problems.

STREGO outperforms all other solvers on Walker and Mopta08, where it takes the lead early and ends at $\approx -800$ and $\approx 262$, respectively, as opposed to $\approx -490$ for BAXUS and $\approx 275$ for TURBO, the next best solvers. On HalfCheetah, it ends slightly lower than BAXUS and far ahead of the other solvers. On Swimmer, all solvers end close to each other, with STREGO and TREGO slightly ahead. The remaining two problems favor other solvers. On Hopper, TURBO and BAXUS end lower than STREGO ($\approx -1700$ and $\approx -1300$, as opposed to $\approx -1100$), and on Rover, TREGO improves much faster at first and ends slightly lower ($\approx 6.3$ as opposed to $\approx 6.5$).

The relative performance of the other four solvers varies from one problem to another, which suggests that the problems differ in structure. The good results of BAXUS on Walker and HalfCheetah point to an exploitable low-dimensional structure in these problems. BAXUS is weaker on Swimmer and Rover, whose active subspace may be too high-dimensional for its random embeddings to capture within the budget. On Mopta08, which is known to have a high effective dimension~\cite{papenmeier2022increasing}, it ranks only third. TREGO ranks third on HalfCheetah, fourth on Mopta08, second to last on Walker and last on Hopper, where it barely improves after about $100$ evaluations. A plausible cause is that TREGO trains a single GP on the entire DoE, whose lengthscales may not fit the neighborhood of the incumbent once the trust region shrinks (Section~\ref{sec:local_phase}). NSMA-BO stalls early on Walker and Mopta08, and its progress on Mopta08 is essentially flat after about $50$ evaluations. This behavior may come from its variance objective, which favors isolated points near the boundary, where little is learned in high dimension (Section~\ref{sec:global-phase}). STREGO is designed to avoid both limitations, with $\mathrm{IVR}$ in the global phase and a local GP in the local phase, and it imposes no restriction on the effective dimension. We believe that these properties explain, at least in part, why it performs well both on Walker and HalfCheetah, where BAXUS is also strong, and on Mopta08, which has a high effective dimension.
\begin{figure}[h!]
    \centering
    \includegraphics[width=\linewidth]{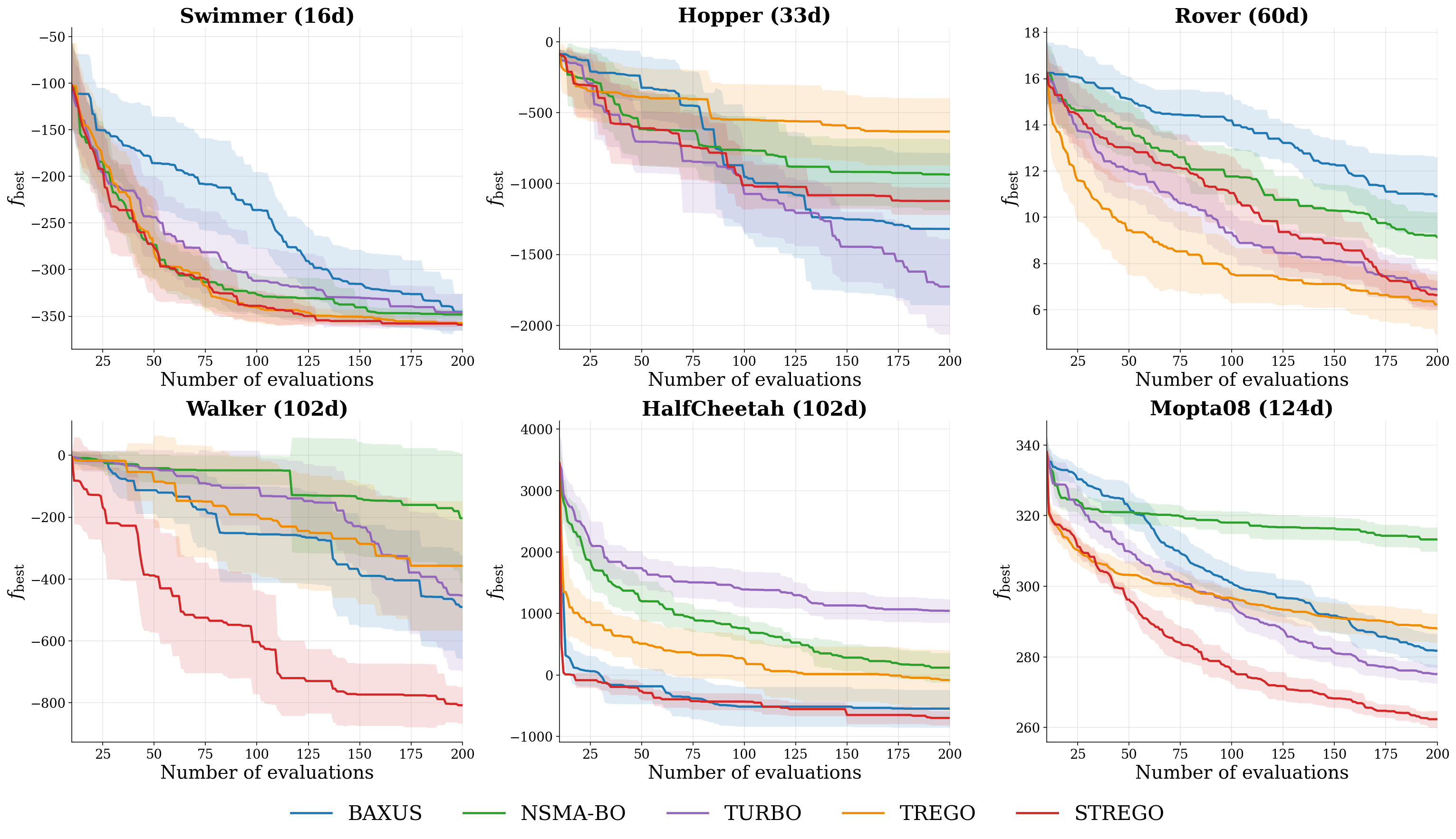}
    \caption{Convergence of STREGO, TREGO, TURBO, BAXUS and NSMA-BO on six real-world benchmarks of
increasing dimensionality ($16$d to $124$d).}
    \label{fig:realworldprob}
\end{figure}


\section{Conclusion}
\label{sec:conclusion}
At the outset, we asked whether a trust-region BO method could keep its convergence guarantees with noisy evaluations and remain competitive in high dimension. Our results indicate that it can. On the theoretical side, our framework extends the convergence analysis of TREGO to noisy function estimates that are sufficiently accurate with sufficiently high probability. On the practical side, the performance of STREGO seems largely due to two design choices. Its global phase ranks candidates by the uncertainty they remove over the whole domain rather than by their own uncertainty, and its local phase fits its model only to the points near the incumbent.

In our experiments, replacing the predictive variance by the integrated variance reduction generally improved the final values, except on Schwefel. STREGO gave the best solutions on average in every synthetic configuration tested, from dimension $4$ to $100$, with and without noise, whereas TREGO ranked last from dimension $50$ on. On the real-world problems, STREGO found the best solution on Walker, Mopta08 and HalfCheetah and tied with TREGO on Swimmer, while TURBO and BAXUS did better on Hopper and TREGO on Rover. Unlike BAXUS, which searches low-dimensional subspaces, STREGO searches the full space, and it performed well both on Walker and HalfCheetah, where BAXUS is strong, and on Mopta08, where BAXUS ranks third. Its weakness is its computational cost at low dimension. Up to dimension $20$, it was more expensive than three of the four other solvers, whereas from dimension $50$ on, only NSMA-BO was cheaper, and only without noise.

Future research will be devoted to three extensions of STREGO. The first is a cheaper way to solve the bi-objective problem of the global phase, to reduce the cost of STREGO at low dimension. The second is the handling of categorical and integer variables, which are common in the tuning of machine learning models, a setting where evaluations are noisy. The third is the direct handling of blackbox constraints and of simulations of varying fidelity, both common in aircraft design~\cite{priem2020efficient}. Because the framework leaves the estimation procedure open, we will also examine whether low-fidelity simulations can provide sufficiently accurate estimates.

\bibliographystyle{abbrv}
\bibliography{ref}

\appendix
\section{Synthetic test problems}\label{app:test-prob}

\paragraph{COCO/BBOB-inspired synthetic functions.}
Our first benchmark set is inspired by the COmparing Continuous Optimizers (COCO) platform \cite{hansen2016coco} and in particular its BBOB test suite.
We consider five standard analytical functions: Ackley, Alpine-01, Rotated Rastrigin, Schwefel and Hartmann-6. For each function, we report results in dimensions $d\in \{4,20,50,100\}$ except for Hartmann-6 which only allows $d\geq6$.

\paragraph{Ackley function.}
The Ackley function is a widely used multimodal benchmark, characterized by a nearly flat outer region and a deep, narrow basin around the global optimum.
\[
f_{\mathrm{Ackley}}(x)
=
-20\exp\!\left(-0.2\sqrt{\frac{1}{d}\sum_{i=1}^{d} x_i^2}\right)
-\exp\!\left(\frac{1}{d}\sum_{i=1}^{d}\cos(2\pi x_i)\right)
+ 20 + e.
\]
The global minimum is $f(x^\star)=0$ at $x^\star=(0,\ldots,0)$, for $x \in [-32.768,32.768]^d$.

\paragraph{Alpine function (Alpine-01).}
The Alpine-01 function is a nonsmooth benchmark with sharp ridges, commonly
used to assess robustness to nondifferentiability.
\[
f_{\mathrm{Alpine}}(x)
=
\sum_{i=1}^{d}
\left|
x_i \sin(x_i) + 0.1 x_i
\right|.
\]
The global minimum is $f(x^\star)=0$ at $x^\star=(0,\ldots,0)$, for
$x \in [-10,10]^d$.

\paragraph{Schwefel function.}
The Schwefel function is a benchmark whose global optimum lies geometrically distant from the next-best local minima.
\[
f_{\mathrm{Schwefel}}(x)
=
418.9829\,d
-
\sum_{i=1}^{d}
x_i \sin\!\left(\sqrt{|x_i|}\right).
\]
The global minimum is $f(x^\star)=0$ at $x^\star=(420.9687,\ldots,420.9687)$, for $x \in [-500,500]^d$.
\paragraph{Hartmann-6 function.}
The six-dimensional Hartmann function is a multimodal benchmark with six local minima and a narrow basin around the global optimum.
\[
f_{\mathrm{Hartmann6}}(x)
=
-\sum_{i=1}^{4}
\alpha_i
\exp\!\left(
-\sum_{j=1}^{6} A_{ij}\,\bigl(x_j - P_{ij}\bigr)^2
\right),
\]
where $\alpha = (1.0,\,1.2,\,3.0,\,3.2)^\top$,
\[
A =
\begin{pmatrix}
10 & 3 & 17 & 3.5 & 1.7 & 8\\
0.05 & 10 & 17 & 0.1 & 8 & 14\\
3 & 3.5 & 1.7 & 10 & 17 & 8\\
17 & 8 & 0.05 & 10 & 0.1 & 14
\end{pmatrix},
\qquad
P = 10^{-4}
\begin{pmatrix}
1312 & 1696 & 5569 & 124 & 8283 & 5886\\
2329 & 4135 & 8307 & 3736 & 1004 & 9991\\
2348 & 1451 & 3522 & 2883 & 3047 & 6650\\
4047 & 8828 & 8732 & 5743 & 1091 & 381
\end{pmatrix}.
\]
The global minimum is $f(x^\star)\approx -3.32237$ at $x^\star \approx (0.20169,\,0.15001,\,0.47687,\,0.27533,\,0.31165,\,0.65730)$, for $x \in [0,1]^6$.
We embed the standard 6-dimensional Hartmann function in higher dimensions ($d > 6$)  by evaluating it on six coordinates chosen at random (fixed for each d) and leaving the other coordinates as inactive dummy variables that do not affect the objective.
\paragraph{Rotated Rastrigin function.}
The rotated Rastrigin function is a highly multimodal benchmark with a regular grid of local minima. An orthogonal rotation couples the coordinates, so the problem cannot be split into one-dimensional subproblems.
\[
f_{\mathrm{RotRastrigin}}(x)
=
10\,d
+
\sum_{i=1}^{d}
\left[
z_i^2 - 10\cos\!\left(2\pi z_i\right)
\right],
\qquad
z = R\,x,
\]
where $R \in \mathbb{R}^{d \times d}$ is a fixed random orthogonal matrix.
The global minimum is $f(x^\star)=0$ at $x^\star=(0,\ldots,0)$, for $x \in [-5.12,5.12]^d$.
\paragraph{Noisy variants.}
To evaluate robustness under stochastic evaluations, we construct noisy variants of each synthetic function using two noise settings commonly used in the noisy blackbox optimization literature \cite{uchida2024cma}.

\textbf{\emph{Additive Gaussian noise}} corrupts the objective with noise that is calibrated to each function's output scale:
$$
\tilde f(x) = f(x) + \sigma_{f,d} \epsilon, \epsilon \sim \mathcal{N}(0,1),
$$
where $\sigma_{f,d}$ is the function and dimension specific noise standard deviation, calibrated to  approximately 10\% of the output range for function $f$ in dimension $d$. We estimate the output range of each objective via Monte Carlo sampling. For each $(f, d)$ pair, we uniformly sample 10,000 random points from the domain, evaluate $f$ at each point and compute the observed range $R_{f,d} = \max_i f(x_i) - \min_i f(x_i)$. We then set $\sigma_{f,d} = 0.10 \times R_{f,d}$. 

\textbf{\emph{Multiplicative Gaussian noise}} instead scales the perturbation with the function value:
\[
\tilde f(x) = f(x)\big(1 + \sigma \epsilon\big) ,
 \epsilon \sim \mathcal{N}(0,1),\]
where $\sigma=0.1$ .
\paragraph{Shifting.}
Ackley, Alpine-01 and Rotated Rastrigin have one of their global optima at the origin $x^\star = (0,\ldots,0)$, the exact center of the search domain. Evaluating them unshifted would favor solvers whose sampling is biased toward the center. Following the idea of~\cite{yu2025git}, we therefore evaluate shifted variants in which every coordinate is displaced:
\[
f_{\mathrm{shifted}}(x) = f(x - s), \qquad s_i \sim \mathcal{U}\big(x_{\mathrm{LB}} + 0.2\,\Delta,\; x_{\mathrm{UB}} - 0.2\,\Delta\big), \quad i=1,\ldots,d,
\]
where $\Delta = x_{\mathrm{UB}} - x_{\mathrm{LB}}$ is the width of the domain. The shifted optimum $x^\star = s$ thus lies uniformly in the central $60\%$ of the domain along each coordinate. All three functions are nonnegative on $\mathbb{R}^d$ and attain their minimum value $0$ at the origin, so the region of $x - s$ that falls outside the canonical domain cannot contain a better minimum.

Schwefel has the opposite problem. Its optimum $x^\star_i = 420.9687$ lies at $92.1\%$ of the domain $[-500, 500]$ in every coordinate, close to the upper bound. This placement favors solvers biased toward the boundary and penalizes solvers whose acquisition criteria integrate over the domain. Unlike the other three functions, Schwefel is not bounded below outside its canonical domain, so a large shift would let $x - s$ reach regions whose values compete with, or fall below, the true optimum. We therefore draw a negative shift from a narrow range:
\[
f_{\mathrm{shifted}}(x) = f(x - s), \qquad s_i \sim \mathcal{U}(-140, -125), \quad i=1,\ldots,d .
\]
This moves the optimum inward to $x^\star_i = 420.9687 + s_i \in [281.0, 296.0]$, that is, to $78$--$80\%$ of the domain, while keeping $f(x^\star) = 0$. The shift is capped at $140$ in magnitude so that the region of $x_i - s_i$ outside the canonical domain, $(500, 640]$, contributes a per-coordinate value of at least $313.6$. That is well above the second-best local minimum inside the original domain ($118.4$). The global optimum therefore stays unique. Moreover, since $x_i - s_i \in [-375, 640]$, all seven per-coordinate local minima of the original function remain inside the search domain, so the function is just as multimodal as before.

For every function, the shift vector $s$ is drawn once for each dimension $d$ from a fixed seed and the same $s$ is used in all trials and for all solvers.

\section{Hyperparameters}\label{app:hyperparameters}
All experiments use the same STREGO configuration across benchmarks, dimensions
and noise regimes. The settings are summarized in Table~\ref{tab:strego_hparams}.
\begin{table}[ht!]
\centering
\begin{tabular}{@{}l r@{}}
\toprule
\textbf{Hyperparameter} & \textbf{Setting} \\
\midrule
Initial DoE size ($N_0$)                          & $10$  \\
Evaluation budget             & $200$ \\
Number of trials                                  & $20$ \\
Base seed                                         & $42$ \\
Initial trust-region radius ($\delta_0$)          & $0.5\,(1/5)^{1/d}$ \\
Expansion factor ($\gamma$)                       & $2.0$ \\
Contraction factor, empty iteration ($\beta_0$) & $0.7$ \\
Contraction factor, uncertain unsuccessful ($\beta_1$) & $0.7$ \\
Contraction factor, certain unsuccessful ($\beta_2$)   & $0.5$ \\
$\kappa$                   & $1.0$ \\
Minimum number of local points ($n_{\min}$)       & $10$ \\
Global batch size ($\mathrm{G}$)       & $3$ \\
$r_{\max}$      & $1$ \\
$r_{\min}$   & $\epsilon_m \approx 2.2 \times 10^{-16}$
\end{tabular}
\vspace{4mm}
\caption{STREGO hyperparameters used for a $d$-dimensional problem.}
\label{tab:strego_hparams}
\end{table}

\FloatBarrier
\section{Sensitivity analysis: global batch size}
\label{app:batch_size_analysis}
Figure~\ref{fig:batchsize_ablation} compares STREGO with the global batch sizes $\mathrm{G}\in \{1, 2, 3, 4\}$, with all other settings and the evaluation budget held fixed. At $d=4$, $\mathrm{G}=1$ performs best on all four functions and in all three noise settings, except on Schwefel under multiplicative noise, where the confidence bands of all variants overlap. For $d \geq 20$, the variants with $\mathrm{G}>1$ generally match or outperform $\mathrm{G}=1$ on Ackley, Alpine-01 and Rotated Rastrigin. The gap is clearest on Ackley under additive noise, where, at $d=50$, the variants with $\mathrm{G}>1$ end below 18, as opposed to about 19 for $\mathrm{G}=1$. Schwefel is again the exception, as $\mathrm{G}=1$ performs best and performance generally degrades as $\mathrm{G}$ increases.

These results may be explained by the division of the fixed budget between the two phases. A larger batch spends more evaluations on exploring the domain and leaves fewer for the local phase. At $d=4$, the domain is small enough to be covered with few global evaluations, so additional global candidates mainly reduce the budget of the local phase. Schwefel has many deep local minima and no global shape that guides the search toward the optimum, so progress comes mainly from the local search around the incumbent. In higher dimension, covering the domain is harder, and the additional global candidates improve the results. Although $\mathrm{G}=1$ performs best at $d=4$, we keep $\mathrm{G}=3$ in all dimensions because high dimension is our main interest.
\begin{figure}
    \centering
    \includegraphics[width=0.9\linewidth]{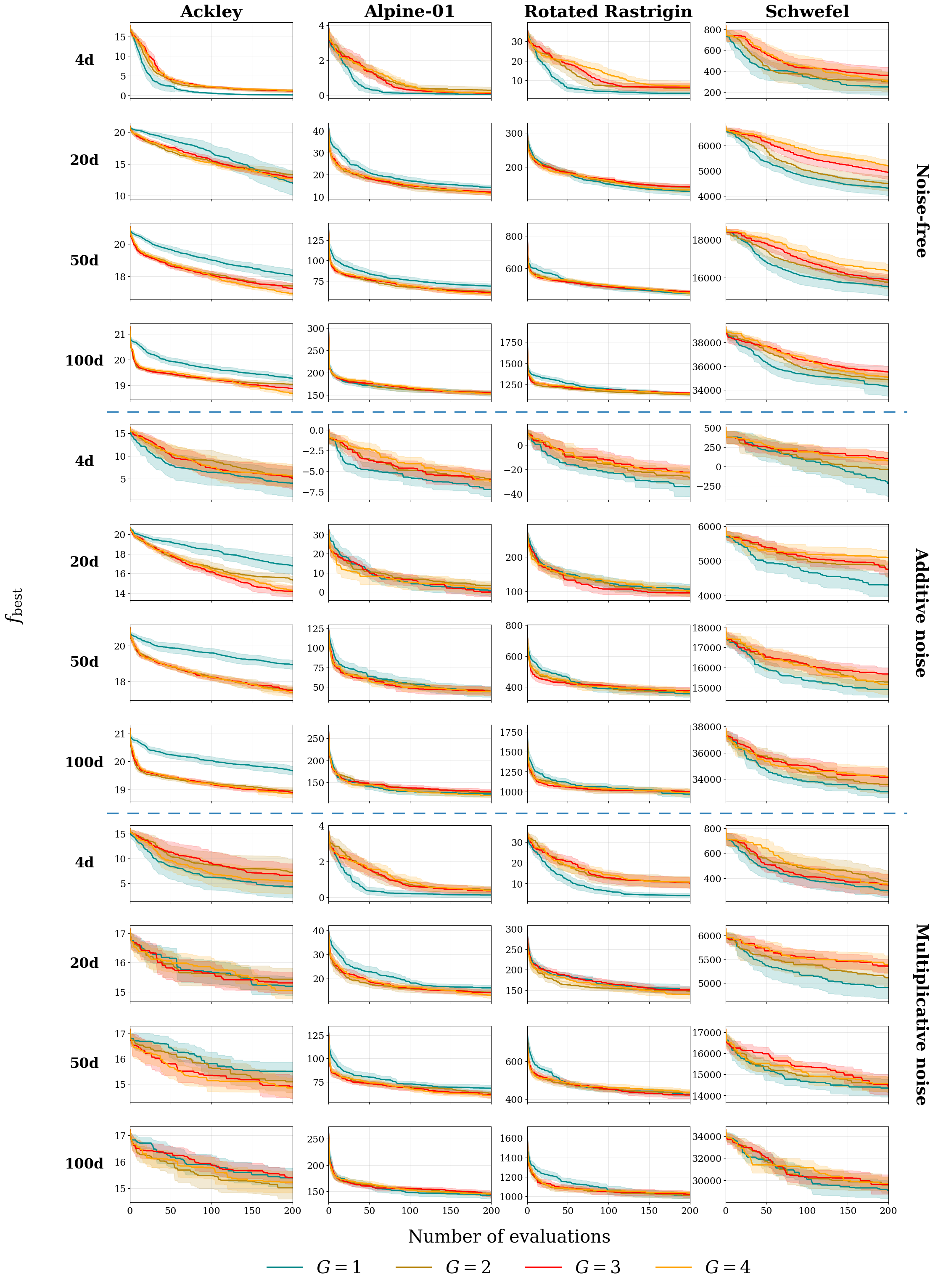}
    \caption{ Comparison of STREGO with global batch sizes $\mathrm{G} \in \{1, 2, 3, 4\}$, all other settings held fixed, across dimensions and noise settings, averaged over 20 trials.}
    \label{fig:batchsize_ablation}
\end{figure}

\end{document}